\newcommand{\V}[1]{\ensuremath{\boldsymbol{#1}}\xspace}
\newtheorem{thm}{Theorem}
\newtheorem{lemma}{Lemma}
\newtheorem{prop}{Proposition}
\def\twoImages#1#2#3#4#5#6 
\newenvironment{definition}[1][Definition]{\begin{trivlist}
		\item[\hskip \labelsep {\bfseries #1}]}{\end{trivlist}}
\DeclareMathOperator*{\argmax}{arg\,max}
\newenvironment{manualtheorem}[1]{%
	\manualtheoreminner
}{\endmanualtheoreminner}
\begin{document}
\title{Identifiability and Consistency of Network Inference Using the Hub Model and Variants}
\date{}
\author{
	Yunpeng Zhao  \\ 
    Arizona State University \\
	\and
	Peter Bickel \\
	University of California, Berkeley \\
	\and
	Charles Weko\\
	U.S. Army \\
}
\maketitle

\begin{abstract}
	Statistical network analysis primarily focuses on  inferring  the parameters of an observed network. In many applications, especially in the social sciences, the observed data is the groups  formed by individual subjects.   In these applications, the network is itself a parameter of  a statistical model. Zhao and Weko (2019) propose a model-based approach, called the \textit{hub model}, to infer implicit networks from grouping behavior. The  hub model  assumes  that  each member of the  group is brought together by a member of the group called the \textit{hub}. The hub model belongs to the family of Bernoulli mixture models.  Identifiability of parameters is a notoriously difficult problem for Bernoulli mixture models. This paper proves  identifiability of the hub model parameters  and estimation consistency under mild conditions. Furthermore,  this paper generalizes the hub model by  introducing a  model component  that allows  hubless groups in which individual nodes  spontaneously appear  independent of any other individual.  We refer to this additional component as the \textit{null component}.  The new model bridges the gap between the hub model and the degenerate case of the mixture model -- the Bernoulli product.  Identifiability and consistency are  also proved for the new model. Numerical studies are provided to demonstrate the theoretical results.
\end{abstract}

{\bf Keywords:}  Identifiability; asymptotic properties; network inference; Bernoulli mixture models.

\section{INTRODUCTION}

In recent years, network analysis has been applied in  science and engineering fields including mathematics, physics, biology, computer science, social sciences and statistics (see \cite{Getoor2005, Goldenberg2010, Newman2010} for reviews). Traditionally, statistical network analysis deals with parameter estimation of an observed network, i.e., an observed adjacency matrix. For example, community detection, a topic of broad interest, studies how to partition the node set of an observed network into cohesive overlapping or non-overlapping communities (see \cite{abbe2017community,zhao2017survey} for recent reviews). Other well-studied statistical network models include the preferential attachment model \citep{Barabasi&Albert1999}, exponential random graph models \citep{Frank&Strauss1986,robins2007introduction}, latent space models \citep{Hoff2002,Hoff2007}, and the graphon model \citep{diaconis2007graph,gao2015rate,zhang2017estimating}.

In contrast to traditional statistical network analysis, this paper focuses on  inferring a latent network structure. Specifically, we model data with the following format: each observation in the dataset is a subset of nodes that are observed simultaneously.  An observation is called a \textit{group} and  a full dataset  is called \textit{grouped data}.  \cite{Wasserman94} introduced this format using the toy example of a children's birthday party. In  their simple example, children are treated as nodes and  each party represents a group -- i.e., a subset of children who attended the same party is a group. The reader is referred to \cite{Zhao2015,weko2017penalized} for applications of such data to the social sciences and animal behavior. 

The observed grouping behavior presumably results from a latent social structure that can be interpreted as a  network structure of associated individuals \citep{Moreno34}. The  task is therefore to infer  a latent network structure from grouped data. Existing methods  mainly focus on ad-hoc descriptive approaches from the social sciences literature, such as the co-occurrence matrix \citep{Wasserman94} or the half weight index \citep{Cairns87}.  \cite{Zhao2015} propose the first model-based approach, called the \textit{hub model}, which assumes that every observed group has a \textit{hub} that brings together the other members of the group. 

 \cite{Zhao2015} demonstrated the hub model by analyzing co-sponsorship of legislation in the Senate of the 110$^{th}$ United States Congress.  The rules of the Senate require that each piece of legislation have a single, unique sponsor; however, other members may co-sponsor the bill.  These rules mean that the legislation sponsorship data conforms to the hub model assumption that every group has a single  hub. Analyzing this data is trivial when the sponsors are known, i.e., when the hubs are observed; \cite{Zhao2015} also estimated the latent network when sponsorship data was eliminated from the data. In general, when the hub nodes of grouped data are known, estimating the model parameters is a trivial task.  In most research situations, hub nodes are unknown and need to be modeled as latent variables. Under this setup, estimating the model parameters becomes a more difficult task.

This paper has two aims: first, to prove the identifiability of the canonical parameters for the hub model, i.e., the probabilities of being a hub node of a group and the probabilities of being included in a group  formed by a particular hub node, and the asymptotic consistency for the estimators. Both results are proved when hubs are unobserved. The second aim is to expand the hub model  definition to allow for hubless groups. Identifiability and consistency are also proved for this new model.  

Given a parameterization, the parameters of a family of distributions are  identifiable if different values of the parameter must correspond to different distributions. The hub model is a restricted class from the family of finite mixtures of multivariate Bernoulli \citep{Zhao2015}.  \cite{gyllenberg1994non} showed that in general the parameters of finite mixture models of multivariate Bernoulli are  not identifiable.  \cite{Zhao2015} showed that the canonical parameters of the hub model are identifiable under two assumptions.  Their first assumption is that the hub node of each group always appears in the group it  forms.  They further gave a counterexample that shows the parameters are not identifiable if only this first assumption is imposed.  To achieve identifiability, they applied a second assumption requiring that relationships be reciprocal   (that is, the adjacency matrix is symmetric). 

In applications where prediction accuracy is the main goal \citep{carreira2000practical}, identifiability is not necessary because the exact values of the estimated parameters are not used in follow-on analysis.  However, to meet the needs of researchers in the social sciences and animal behavior, identifiability is a prerequisite.  That is, independent researchers analyzing the same data must obtain the same estimates.  Additionally,  for statistical inference  to yield parameters that can  be consistently estimated, identifiability is a necessary condition \citep{allman2009identifiability}.  

This paper considers identifiability of canonical parameters when adjacency matrices are not symmetric. The model is therefore referred as to the \textit{asymmetric hub model}.  We prove that when the hub set (i.e., the set of possible hubs) contains at least one fewer member than the node set, the parameters are identifiable under simple conditions. We argue that  this new setup is practical and less restrictive than earlier model conditions. As pointed out by \cite{weko2017penalized}, it is not necessary to  include every node in the population  in the hub set because there may be low ranking members who do not have  the influence to initiate a group. Moreover, allowing the hub set to be smaller than the node set can reduce model complexity. 

In addition to identifiability, we prove the consistency of estimators for the hub model.      As mentioned before, given an estimate of the hub nodes, estimating the model parameters is trivial.  Therefore, in our consistency proofs, we first prove the consistency of the hub estimates and then show that the estimators of model parameters are consistent as a corollary. We consider the most general setup in which the number of groups (i.e., sample size), the size of the node set, and the size of the hub set are all allowed to grow.  Since the hub model is a mixture model, estimation of the latent hub for each group can be viewed as a clustering problem.  That is, the latent hub of a group can be viewed as the class label of the group. We therefore borrow the technique of profile likelihoods from the community detection literature \citep{Bickel&Chen2009,Choietal2011} and prove the consistency for hub estimation. The consistency of parameter estimation then holds as a corollary. 

As mentioned above, the second aim  of this paper is to generalize the hub model  to accommodate  hubless groups and then prove identifiability and consistency of this generalized model. The classical hub model requires each group to have a  hub. As observed in \cite{weko2017penalized}, when fitting the hub model to data, one sometimes has to choose an unnecessarily large hub set due to this requirement. For example, a node that appears infrequently in general but appears once  as a singleton in a group must be included in the hub set simply because  it  must be the hub node of that group. To relax the \textit{one-hub} restriction, we add a  component to the hub model that allows  hubless groups in which nodes appear independently. We call this additional component the \textit{null component} and call the new model the \textit{hub model with a null component}. The null component creates a natural connection between the hub model and a null model. That is, if the hub set is empty then the model degenerates to the model in which nodes appear independently in groups. That is, each group is generated by independent Bernoulli trials. The proofs of identifiability and consistency for the hub model with a null component do not parallel the first set of proofs and are more challenging.

For a brief review of other related work, we recommend the following literature. The study of identifiability under finite mixture models dates back to the 1960s \citep{teicher1963identifiability,yakowitz1968identifiability}.  \cite{gyllenberg1994non} showed that finite mixtures of Bernoulli products are not identifiable.  \cite{allman2009identifiability} introduced and studied the concept of generic identifiability, which means that the set of non-identifiable parameters has Lebesgue measure zero. By contrast, we study identifiability defined in the strictest sense. Identifiability under another class of mixture Bernoulli models has been recently studied \citep{xu2017identifiability,gu2019sufficient}. This class of models, for example, the DINA  (Deterministic Input, Noisy ``And'' gate) model, has applications in psychological and educational research. The motivation, the model setup, and the proof techniques presented here  are all different from previous research, and the result of neither implies the other. 

The remainder of this article is organized as follows. The structures of Section \ref{sec:classical} and Section \ref{sec:null} are parallel. In each section, we first present the model setup and then give the identifiability and consistency results. The behavior of the estimators is evaluated by numerical studies in Section \ref{sec:numerical}. Section \ref{sec:summary} concludes with a summary and discussion.  Technical proofs are given in the Appendix. 

\section{THE ASYMMETRICAL HUB MODEL} \label{sec:classical}
\subsection{Model setup} \label{sec:classical_setup}

First, we review the grouped data structure and propose a modified version of the hub model, called the \textit{asymmetric hub model}. For a set of $n$ individuals, $V=\{1,..., n\}$, we observe $T$ subsets, called \textit{groups}.

In this paper,  groups are treated as a random sample of size $T$ with each group being an  observation. Each  group  is represented by an $n$ length row vector $G^{(t)}$, where
\[ G_i^{(t)} = \left\{ 
\begin{array}{l l}
1 & \quad \textnormal{if node $i$ appears in group $t$,}\\
0 & \quad \textnormal{otherwise, }
\end{array} \right.\]
for $i=1,...,n$ and $t=1,...,T$. 
The full dataset is a $T \times n$ matrix $\V{G}$ with $G^{(t)}$ being its rows. 

Let $V_L$ be the set of all nodes which can serve as a hub and let $n_L=|V_L|$.  We refer to $V_L$ as the \textit{hub set} and call the nodes in this set \textit{leaders}.

In contrast to  the setup in \cite{Zhao2015} where the  hub set contains all nodes, we assume  that the hub set may contain fewer members than  the whole set of nodes. In other words, the hub set $V_L$ may be a subset of $V$.  For  simplicity of notation, we further assume $V_L=\{1,...,{n_L}\}$. We refer to nodes from $n_L+1$ to $n$ as \textit{followers}. 

Given this notation,  the true hub of $G^{(t)}$ is represented by $z_{*}^{(t)}$ and takes on values from $1,...,n_L$.

Under the hub model, each group $G^{(t)}$ is independently generated by the following two-step process:
\begin{enumerate}[label=(\roman*)]
	\item The hub is sampled from a multinomial trial with parameter $\rho=(\rho_1,...,\rho_{n_L})$, i.e., $\mathbb{P}(z_*^{(t)}=i)=\rho_i$, with $\sum_{i=1}^{n_L} \rho_i=1$.
	
	\item Given  the hub node $i$,  each node $j$ appears in the group independently with probability $A_{ij}$, i.e., $	\mathbb{P}(G_j^{(t)}=1|z_*^{(t)}=i)=A_{ij}$.  
\end{enumerate}

Before proceeding, there are a number of implications of the proceeding terms and notation.  We interpret $\rho_i$ to be the probability that node $i$ is the hub of a group and $A_{ij}$ to be the probability that node $j$ is a member of a group given node $i$ is the hub of the group.  Thus, the term \textit{leader} applies to any node $i$ with a non-zero $\rho_i$ and the term \textit{follower} applies to any node $j$ with $\rho_j=0$.  Additionally, observe that multiple leaders may appear in the same group although only one of them will be the hub of that group.  That is, for two nodes $i$ and $j$ where $\rho_i>0$ and $\rho_j>0$, $A_{ij}$ may be non-zero.


A key assumption  from \cite{Zhao2015} which we adopt in this paper is that a hub node must appear in any group that it forms (i.e., $A_{ii}\equiv 1$, for $i=1,...,n_L$). The parameters for the hub model are thus
\begin{align*}
\rho & =(\rho_1,...,\rho_{n_L}), \\
A_{n_L\times n} & = \begin{pmatrix}
1 &  A_{12} & \cdots & A_{1, n_L}  & A_{1,n_L+1} & \cdots & A_{1,n}\\
A_{21} & 1 & \cdots & A_{2,n_L} & A_{2,n_L+1} & \cdots & A_{2,n} \\
\vdots & \vdots & \ddots & \vdots & \vdots & \ddots & \vdots  \\ 
A_{n_L,1} & A_{n_L,2} & \dots & 1 & A_{n_L,n_L+1} & \cdots & A_{n_L,n}
\end{pmatrix}.
\end{align*}
As in \cite{Zhao2015}, we interpret $A_{ij}$ as the strength of the relationship between node $i$ and $j$. We differ from \cite{Zhao2015} in that   $A$ may be a non-square matrix and $A_{ij}$ is not necessarily equal to $A_{ji}$. The setting in this article is more natural.   Social relationships are usually non-reciprocal and in most organizations there  are members who do not have the authority or  willingness to initiate  groups. Later in this paper, we will show how this new setup presents challenges for theoretical analysis but is feasible.

We begin with the case where both $G$ and $z_*$ are observed. The likelihood function  is
\begin{equation*}
\mathbb{P}(\V{G},z_*|A,\rho)=\prod_{t=1}^T \prod_{i=1}^{n_L} \prod_{j=1}^n \big[A_{ij}^{G_j^{(t)}} (1-A_{ij})^{(1-G_j^{(t)})}\big]^{1(z_*^{(t)}=i)}\prod_{i=1}^{n_L}\rho_i^{1(z_*^{(t)}=i)},
\end{equation*}
where $1(\cdot)$ is the indicator function. With both $G$ and $z_*$ being observed, it is straightforward to estimate $A$ and $\rho$ by the maximum likelihood estimator:
\begin{align*}
\hat{A}_{ij}^{z_*}= & \frac{\sum_t G_j^{(t)} 1(z_*^{(t)}=i) }{\sum_t 1(z_*^{(t)}=i)}, \quad  i=1,...,n_L,j=1,...,n, \\
\hat{\rho}_i^{z_*}= & \frac{\sum_t 1(z_*^{(t)}=i)}{T}, \quad i=1,...,n_L.
\end{align*}

When the hub node of each group is latent, i.e., when $z_*$ is unobserved, the estimation problem becomes challenging and is the focus in this paper. Integrating out $z_{*}$, the marginal likelihood of $\V{G}$ is
\begin{equation}\label{likelihood_classical}
\mathbb{P}(\V{G}|A,\rho)=\prod_{t=1}^T \sum_{i=1}^{n_L} \rho_i \prod_{j=1}^n {A_{ij}^{G_j^{(t)}} (1-A_{ij})^{1-G_j^{(t)}}},
\end{equation}
which has the form of a Bernoulli mixture model. Hereafter the term hub model refers to the case where $z_*$ is unobserved, unless otherwise specified. 

As will be seen in Section \ref{sec:classical_consistency}, a key feature of parameter estimation when the hub nodes are unknown is estimating the hub node for each group. 

Before considering estimation  of $\rho$ and $A$, we need to  establish the identifiability of parameters $\rho$ and $A$ under \eqref{likelihood_classical}. If there exist two different sets of parameters that can give the same likelihood then the parameters are not estimable.  \cite{Zhao2015} observed that when every node is allowed to be a  leader (i.e., $n_L=n$), $\rho$ and $A$ are not identifiable without additional restrictions. They further proved that the symmetry of $A$ is a sufficient condition for identifiability. We remove this stringent constraint and seek a set of milder identifiability conditions in the next section. 
\subsection{Identifiability under the  hub model}
To precisely define identifiability, let $\mathcal{P}$ be the parameter space where $\mathcal{P}=\{(\rho,A) | 0\leq \rho_i \leq 1; A_{ii} = 1; 0\leq A_{ij}\leq 1, i=1,...,n_L, j=1,...,n, i\neq j   \}$. Let $\V{g}=(g_i^{(t)})$ be any realization of $\V{G}$ under the hub model.
\begin{definition}\label{def:identi}
	The parameters $(\rho,A)$ are identifiable under the hub model if the following holds:
	\begin{align*}
	\forall g, \forall (\tilde{\rho},\tilde{A}) \in \mathbb{P}(\V{G}=\V{g}|\rho,A)= \mathbb{P}(\V{G}=\V{g}|\tilde{\rho},\tilde{A}) \iff  (\rho,A) = (\tilde{\rho},\tilde{A}).
	\end{align*} 
\end{definition} 

Note that we define identifiability in the strictest sense  and the above definition does not allow label swapping of latent classes.  In cluster analysis label swapping refers to the fact that nodes can be successfully partitioned into latent classes, but individual classes cannot be uniquely identified.  For example, community detection may correctly partition voters into communities based on their political preferences, but cannot identify which political party each community prefers.  This is not an issue in the hub model due to the constraint $A_{ii}=1$. In addition, note that we only need to consider identifiability for the distribution of a single observation, i.e., $T=1$ because the data are independently and identically distributed. Let $g$ be a realization of a single observation hereafter.

We now give the identifiability result for the asymmetric hub model. 

\begin{thm}\label{thm:hub_iden}
	The parameters $(\rho,A)$ of the hub model are identifiable under the following conditions:
	\begin{enumerate}[label=(\roman*)]
		\item $0<\rho_i<1$, for $i=1,...,n_L$;
		\item $A_{ij}<1$, for $i=1,...,n_L, j=1,...,n, i\neq j$;
		\item for all $i=1,...,n_L$, $i'=1,...,n_L,i\neq i'$, there exists $k \in \{n_L+1,...,n\}$ such that $A_{ik} \neq A_{i'k}$.
	\end{enumerate}
\end{thm}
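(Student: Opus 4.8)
The plan is to exploit the defining constraint $A_{ii}=1$, which forces a hub to appear in its own group, to peel the mixture apart one component at a time, and only then to invoke condition (iii). Throughout I work with a single observation $g\in\{0,1\}^n$ and write the likelihood in the product-mixture form of \eqref{likelihood_classical}. For a leader $i$, let $L_i(h)=\prod_{l=1}^{n_L}A_{il}^{h_l}(1-A_{il})^{1-h_l}$ be the leader-coordinate factor (for $h\in\{0,1\}^{n_L}$) and $Q_i(f)=\prod_{k=n_L+1}^{n}A_{ik}^{f_k}(1-A_{ik})^{1-f_k}$ the follower-coordinate factor (for $f\in\{0,1\}^{n-n_L}$), so component $i$ contributes $\rho_i L_i(h)Q_i(f)$. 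The key observation is that $L_i(h)=0$ whenever $h_i=0$, since the $i$-th factor is $A_{ii}^{0}(1-A_{ii})^{1}=0$. Hence for a realization in which exactly a prescribed set of leaders appears, only the corresponding components survive in the mixture sum; this is what lets me separate components and automatically matches the component labels across the two parameterizations (so label swapping never arises).

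\textbf{Step 1 (isolate each component).} First I would take the event $E_i$ in which leader $i$ appears, every other leader is absent, and the followers are arbitrary. By the observation above only component $i$ contributes, so on $E_i$ the likelihood equals $c_i\,Q_i(f)$ with $c_i:=\rho_i\prod_{i'\neq i}(1-A_{ii'})$, which is strictly positive by conditions (i) and (ii). Assuming $(\rho,A)$ and $(\tilde\rho,\tilde A)$ induce the same law, I equate $c_i Q_i(f)=\tilde c_i\tilde Q_i(f)$ for every $f$; summing over $f$ (using $\sum_f Q_i(f)=1$) gives $c_i=\tilde c_i$, and therefore $Q_i=\tilde Q_i$ as distributions on the followers. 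Reading off the one-dimensional marginals of this product distribution then yields $A_{ik}=\tilde A_{ik}$ for every follower $k$. Condition (iii) has not yet been used.

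\textbf{Step 2 (the leader block, where (iii) enters).} Next, for distinct leaders $i,j$ I take the event in which exactly $i$ and $j$ appear among the leaders, i.e.\ $h=e_i+e_j$. Now precisely two terms survive, giving $d_i Q_i(f)+d_j Q_j(f)=0$ for all $f$, where $d_i=\rho_i L_i(h)-\tilde\rho_i\tilde L_i(h)$ and likewise $d_j$. Here is the role of condition (iii): it supplies a follower $k$ with $A_{ik}\neq A_{jk}$, so $Q_i\neq Q_j$; and two distinct probability distributions are automatically linearly independent (if $Q_i=\lambda Q_j$, summing entries forces $\lambda=1$, hence $Q_i=Q_j$). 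Linear independence forces $d_i=d_j=0$. Unwinding $d_i=0$ — noting $\rho_i L_i(e_i+e_j)=\tfrac{A_{ij}}{1-A_{ij}}\,c_i$ — together with $c_i=\tilde c_i\neq 0$ and the injectivity of $x\mapsto x/(1-x)$ on $[0,1)$ (valid by (ii)) delivers $A_{ij}=\tilde A_{ij}$, and symmetrically $A_{ji}=\tilde A_{ji}$.

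\textbf{Step 3 (recover $\rho$), and the main obstacle.} With all of $A$ identified, $\prod_{i'\neq i}(1-A_{ii'})=\prod_{i'\neq i}(1-\tilde A_{ii'})>0$, so $c_i=\tilde c_i$ collapses to $\rho_i=\tilde\rho_i$, completing the proof. I expect the main obstacle to be resisting the temptation to prove that all $n_L$ follower distributions $Q_1,\dots,Q_{n_L}$ are \emph{jointly} linearly independent: this is false in general (for instance, with a single follower and three distinct leader parameters, three distributions live in a two-dimensional space), and condition (iii) does not supply it. The argument works precisely because the constraint $A_{ii}=1$ lets me restrict attention to realizations in which only two leaders are present, where only \emph{pairwise} distinctness of the $Q_i$ — exactly what condition (iii) guarantees — is required.
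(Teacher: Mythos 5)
Your proof is correct and follows essentially the same route as the paper's: both exploit the constraint $A_{ii}=1$ to restrict attention to realizations with a prescribed set of leaders present, identify the follower entries $A_{ik}$ first (without using condition (iii)), and then invoke condition (iii) only to resolve a two-component system that yields the leader--leader entries $A_{ii'}$, after which $\rho$ falls out from the already-identified normalizing constants. The differences are presentational rather than substantive: you aggregate over follower configurations and appeal to linear independence of two distinct product distributions, whereas the paper compares probabilities of individual realizations (an odds-ratio trick) and solves an explicit $2\times 2$ full-rank linear system whose determinant is $A_{i'k}-A_{ik}\neq 0$ --- the same underlying argument.
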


Before proving Theorem \ref{thm:hub_iden}, we make the following remarks. First, note that the conditions only apply to  the true parameters $\rho$ and $A$ but not to $\tilde{\rho}$ and $\tilde{A}$, except that $(\tilde{\rho},\tilde{A}) \in \mathcal{P}$. Second, condition (\romannumeral 3) implies that there exists at least one follower in the node set, i.e., $n_L<n$, and for any pair of  nodes in the hub set, there exists a follower with different probability of being included in groups formed by the two hubs, respectively. 
\begin{proof}[Proof of Theorem \ref{thm:hub_iden}]
	Let $(\tilde{\rho},\tilde{A})\in \mathcal{P}$ be a set of parameters such that $\mathbb{P}(g|\rho,A)= \mathbb{P}(g|\tilde{\rho},\tilde{A})$ for all $g$. 	For all $i=1,...,n_L$, $k=n_L+1,...,n$, consider the probability that only $i$ appears under parameterizations $(\rho,A)$ and $(\tilde{\rho},\tilde{A})$, respectively
	\begin{align*}
	\tilde{\rho}_i (1-\tilde{A}_{ik}) \prod_{j=1,...,n, j \neq i,j \neq k} (1-\tilde{A}_{ij}) & = \rho_i (1-A_{ik}) \prod_{j=1,...,n, j \neq i,j \neq k} (1-A_{ij}),
	\end{align*}
	and the probability that only $i$ and $k$ appear
	\begin{align*}
	\tilde{\rho}_i \tilde{A}_{ik}  \prod_{j=1,...,n,j \neq i, j \neq k} (1-\tilde{A}_{ij})  & =\rho_i A_{ik} \prod_{j=1,...,n,j \neq i, j \neq k} (1-A_{ij}).
	\end{align*}
	Dividing the second equation by the first, we obtain $\tilde{A}_{ik}/(1-\tilde{A}_{ik})=A_{ik}/(1-A_{ik})$ and hence $\tilde{A}_{ik}=A_{ik}$ for $i=1,...,n_L$, $k=n_L+1,...,n$.

	For any $i=1,...,n_L$, $i'=1,...,n_L$, $i\neq i'$, suppose that $k$ is the follower such that $A_{ik} \neq A_{i'k}$. Consider the probability that only $i$ and $i'$ appear  
	\begin{align*}
	&\tilde{\rho}_i \tilde{A}_{ii'} (1-\tilde{A}_{ik}) \prod_{j=1,...,n,j \neq i, j \neq i',j\neq k}(1-\tilde{A}_{ij})+\tilde{\rho}_{i'} \tilde{A}_{i'i}(1-\tilde{A}_{i'k})\prod_{j=1,...,n,j \neq i, j \neq i',j\neq k} (1-\tilde{A}_{i'j})  \\
	= & \rho_i A_{ii'} (1-A_{ik}) \prod_{j=1,...,n,j \neq i, j \neq i',j\neq k}(1-A_{ij})+\rho_{i'} A_{i'i}(1-A_{i'k})\prod_{j=1,...,n,j \neq i, j \neq i',j\neq k} (1-A_{i'j}),  
	\end{align*}
	and the probability that $i$, $i'$ and $k$ appear
	\begin{align*}
	&\tilde{\rho}_i \tilde{A}_{ii'} \tilde{A}_{ik}\prod_{j=1,...,n,j \neq i, j \neq i',j\neq k}(1-\tilde{A}_{ij})+\tilde{\rho}_{i'} \tilde{A}_{i'i}\tilde{A}_{i'k}\prod_{j=1,...,n,j \neq i, j \neq i',j\neq k} (1-\tilde{A}_{i'j})  \\
	= & \rho_i A_{ii'} A_{ik} \prod_{j=1,...,n,j \neq i, j \neq i',j\neq k}(1-A_{ij})+\rho_{i'} A_{i'i}A_{i'k}\prod_{j=1,...,n,j \neq i, j \neq i',j\neq k} (1-A_{i'j}).
	\end{align*}
	As $\tilde{A}_{ik}=A_{ik}$ for $i=1,...,n_L$, $k=n_L+1,...,n$, the above two equations become
	\begin{align}
	&\tilde{\rho}_i \tilde{A}_{ii'} (1-A_{ik}) \prod_{j=1,...,n,j \neq i, j \neq i',j\neq k}(1-\tilde{A}_{ij})+\tilde{\rho}_{i'} \tilde{A}_{i'i}(1-A_{i'k})\prod_{j=1,...,n,j \neq i, j \neq i',j\neq k} (1-\tilde{A}_{i'j}) \label{t1} \\
	= & \rho_i A_{ii'} (1-A_{ik}) \prod_{j=1,...,n,j \neq i, j \neq i',j\neq k}(1-A_{ij})+\rho_{i'} A_{i'i}(1-A_{i'k})\prod_{j=1,...,n,j \neq i, j \neq i',j\neq k} (1-A_{i'j}), \nonumber  \\
	&\tilde{\rho}_i \tilde{A}_{ii'} A_{ik} \prod_{j=1,...,n,j \neq i, j \neq i',j\neq k}(1-\tilde{A}_{ij})+\tilde{\rho}_{i'} \tilde{A}_{i'i}A_{i'k}\prod_{j=1,...,n,j \neq i, j \neq i',j\neq k} (1-\tilde{A}_{i'j}) \label{t2} \\
	= & \rho_i A_{ii'} A_{ik} \prod_{j=1,...,n,j \neq i, j \neq i',j\neq k}(1-A_{ij})+\rho_{i'} A_{i'i}A_{i'k}\prod_{j=1,...,n,j \neq i, j \neq i',j\neq k} (1-A_{i'j}). \nonumber
	\end{align}
	Eq. \eqref{t1} and \eqref{t2} can be viewed as a system of linear equations with unknown variables
	\begin{align*}
	\tilde{\rho}_i \tilde{A}_{ii'}  \prod_{j=1,...,n,j \neq i, j \neq i',j\neq k}(1-\tilde{A}_{ij})
	\end{align*}
	and 
	\begin{align*}
	\tilde{\rho}_{i'} \tilde{A}_{i'i}\prod_{j=1,...,n,j \neq i, j \neq i',j\neq k} (1-\tilde{A}_{i'j}).
	\end{align*} 
	As $A_{ik} \neq A_{i'k}$, the system has full rank and hence has one and only one solution: 
	\begin{align} 
	\tilde{\rho}_i \tilde{A}_{ii'}  \prod_{j=1,...,n,j \neq i, j \neq i',j\neq k}(1-\tilde{A}_{ij}) & =\rho_i A_{ii'}  \prod_{j=1,...,n,j \neq i, j \neq i',j\neq k}(1-A_{ij}), \label{hub_final} \\
	\tilde{\rho}_{i'} \tilde{A}_{i'i}\prod_{j=1,...,n,j \neq i, j \neq i',j\neq k} (1-\tilde{A}_{ij}) & =\rho_{i'} A_{i'i}\prod_{j=1,...,n,j \neq i, j \neq i',j\neq k} (1-A_{i'j}). \nonumber
	\end{align}
	
	Combining \eqref{hub_final} with 
	\begin{align*}
	\tilde{\rho}_i (1-\tilde{A}_{ii'})  \prod_{j=1,...,n,j \neq i, j \neq i',j\neq k}(1-\tilde{A}_{ij}) & =\rho_i (1-A_{ii'})  \prod_{j=1,...,n,j \neq i, j \neq i',j\neq k}(1-A_{ij})
	\end{align*}
	we obtain $\tilde{A}_{ii'}=A_{ii'}$ for $i=1,...,n_L, i'=1,...,n_L$ by a similar argument to that at the beginning of the proof. It follows immediately that $\tilde{\rho}_i=\rho_i$ for $i=1,...,n_L$.
\end{proof}

\subsection{Consistency of the maximum profile likelihood estimator for the hub model}\label{sec:classical_consistency}

We consider the asymptotic consistency for the hub model in the most general setting. That is, we allow the number of groups ($T$), the size of the node set ($n$), and the size of the hub set ($n_L$) to grow. As mentioned before, we  reformulate the problem as a clustering problem where a cluster is defined as the groups  formed by the same hub node.  We borrow the techniques from the community detection literature to prove the consistency of class labels, i.e., the consistency of hub labels. The consistency of parameter estimation then holds as a corollary. 

Let $z=(z^{(t)})_{t=1,...,T}$ be an arbitrary assignment of hub labels. Given $z$, the log-likelihood of the full dataset $\V{G}$ is
\begin{align}\label{cond_lik}
L_G(A|z)=\sum_{t=1}^T \sum_{j=1}^n G_j^{(t)} \log A_{z^{(t)},j}+ (1- G_j^{(t)} ) \log (1-A_{z^{(t)},j} ).
\end{align}
For $i=1,...,n_L$, let $t_i=\sum_t 1(z^{(t)}=i)$  be the number of groups with hub $i$. 
Given $z$, the maximum likelihood estimator of $A$ is  
\begin{align*}
\hat{A}_{ij}^z=\frac{\sum_t G_j^{(t)} 1(z^{(t)}=i) }{t_i }.
\end{align*}
We will omit the upper index $z$ when it is clear from the context.   Plugging $\hat{A}_{ij}$ back into \eqref{cond_lik}, we obtain the profile log-likelihood 
\begin{align*}
L_G(z)=\max_{A} L_G(A|z)=\sum_t \sum_j G_j^{(t)} \log \hat{A}_{z^{(t)},j}+ (1- G_j^{(t)} ) \log (1-\hat{A}_{z^{(t)},j} ).
\end{align*}

Furthermore, let 
\begin{align*}
\hat{z}=\argmax_{z} L_G(z).
\end{align*}
The framework of profile likelihoods are adopted in the community detection literature \citep{Bickel&Chen2009,Choietal2011}, where $z$ is treated as an unknown parameter and we search for the $z$  that optimizes the profile likelihood. 

Recall that $z_*$ is the true class assignment.    We will treat $z_*$ as a random vector.  Although this treatment  makes the proof slightly more complicated, it maintains continuity with the previous section. The same consistency result holds when $z_*$ is treated as fixed.   Let $t_{i*}=\sum_t 1(z_*^{(t)}=i)$, for $i=1,...,n_L$. 

Let $P_j^{(t)}=P(G^{(t)}_j=1|z_{*}^{(t)})=A_{z_*^{(t)},j}$. Then by replacing $G_j^{(t)}$ by $P_j^{(t)}$, we obtain a ``population version'' of $L_G(z)$: 
\begin{align*}
L_P(z)=\sum_t \sum_j P_j^{(t)} \log \bar{A}_{z^{(t)},j}+ (1- P_j^{(t)} ) \log (1-\bar{A}_{z^{(t)},j} ),
\end{align*}
where 
\begin{align}\label{def_A_bar}
\bar{A}_{ij}=\frac{\sum_t P_j^{(t)} 1(z^{(t)}=i) }{t_i }. 
\end{align}

Let $T_e=\sum_t 1(z_*^{(t)} \neq \hat{z}^{(t)})$ be the number of groups with  incorrect hub labels. As discussed previously, we do not allow label swapping in the definition of $T_e$.  Our aim is to prove the following result: 
\begin{align*}
T_e/T=o_p(1), \quad \mbox{as } n_L\rightarrow \infty, n\rightarrow \infty, T\rightarrow \infty.
\end{align*}

Note that the statement above is well-defined even though we assume $z_*$ is a random vector in this treatment.  We prove the consistency of $\hat{z}$ by a typical proof technique for consistency of M-estimators. That is, the consistency of $\hat{z}$ holds by proving a uniform bound for $|L_G(z)-L_P(z)|$ and proving that $T_e/T$ can be bounded by $L_P(z_*)-L_P(\hat{z})$. 

We begin with a lemma that gives bounds for $t_{i*}$.  
\begin{lemma}\label{thm:rho}
	Assume $\min\{\rho_1,...,\rho_{n_L}\}=c_{\textnormal{min}*} /n_L$ and $\max\{\rho_1,...,\rho_{n_L}\}=c_{\textnormal{max}*} /n_L$ where $c_{\textnormal{min}*}$ and $c_{\textnormal{max}*}$ are positive constants. When $n_L^2 (\log n_L)/T=o(1)$, there exist positive constants $c_\textnormal{min}$ and $c_{\textnormal{max}}$ such that 
	\begin{align*}
	\frac{Tc_{\textnormal{min}}}{n_L} \leq t_{i*} \leq \frac{Tc_{\textnormal{max}}}{n_L}, \quad i=1,...,n_L,
	\end{align*} 
	with probability approaching 1. 
\end{lemma}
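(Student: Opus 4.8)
The plan is to exploit that the hubs $z_*^{(t)}$ are drawn i.i.d.\ from the multinomial with parameter $\rho$, so each count $t_{i*}=\sum_t 1(z_*^{(t)}=i)$ is $\textnormal{Binomial}(T,\rho_i)$ with mean $\mathbb{E}[t_{i*}]=T\rho_i$. The hypotheses pin this mean down up to constants, namely $Tc_{\textnormal{min}*}/n_L\le T\rho_i\le Tc_{\textnormal{max}*}/n_L$ for every $i$. The lemma then reduces to showing that each $t_{i*}$ concentrates around its mean tightly enough, and \emph{simultaneously} over all $n_L$ hubs, that a constant-factor two-sided bound survives a union bound.

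First I would fix a tolerance $\delta=1/2$ and apply a multiplicative Chernoff bound to each count, giving
\[
\mathbb{P}\big(|t_{i*}-T\rho_i|>\tfrac{1}{2}T\rho_i\big)\le 2\exp\!\big(-\tfrac{1}{12}T\rho_i\big)\le 2\exp\!\Big(-\tfrac{c_{\textnormal{min}*}}{12}\cdot\tfrac{T}{n_L}\Big),
\]
where the last step uses $T\rho_i\ge Tc_{\textnormal{min}*}/n_L$. A union bound over $i=1,\dots,n_L$ then yields
\[
\mathbb{P}\Big(\exists\,i:\ |t_{i*}-T\rho_i|>\tfrac{1}{2}T\rho_i\Big)\le 2\exp\!\Big(\log n_L-\tfrac{c_{\textnormal{min}*}}{12}\cdot\tfrac{T}{n_L}\Big).
\]
Next I would check that this bound vanishes under the stated growth condition: since $n_L^2(\log n_L)/T=o(1)$ forces $T/n_L\gg n_L\log n_L\gg\log n_L$, the exponent tends to $-\infty$ and the probability goes to $0$. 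On the complementary event one has $\tfrac12 T\rho_i\le t_{i*}\le\tfrac32 T\rho_i$ for all $i$ at once; combining with $Tc_{\textnormal{min}*}/n_L\le T\rho_i\le Tc_{\textnormal{max}*}/n_L$ delivers the conclusion with, for instance, $c_{\textnormal{min}}=c_{\textnormal{min}*}/2$ and $c_{\textnormal{max}}=3c_{\textnormal{max}*}/2$.

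I expect the only genuinely delicate point to be the choice of tail inequality. Because $\rho_i\asymp 1/n_L$, the mean $T\rho_i$ is small relative to $T$, so an additive Hoeffding bound at scale $\delta T\rho_i$ produces an exponent of order $T\rho_i^2\asymp T/n_L^2$, and beating $\log n_L$ after the union bound then requires precisely the assumption $n_L^2(\log n_L)/T=o(1)$; this suggests that condition is calibrated to the Hoeffding route. A relative (multiplicative) Chernoff bound is cleaner and needs only the weaker $n_L(\log n_L)/T=o(1)$, so either inequality closes the argument, and no step beyond these standard tail estimates and the union bound is needed.
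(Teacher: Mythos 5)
Your proof is correct, and structurally it is the same argument as the paper's: a per-hub tail bound, a union bound over the $n_L$ hubs, and constants obtained by perturbing $c_{\textnormal{min}*}$ and $c_{\textnormal{max}*}$. The only difference is the choice of tail inequality. The paper applies additive Hoeffding to $t_{i*}/T$ at deviation scale $\epsilon/n_L$, giving exponent $-2T\epsilon^2/n_L^2$, so after the union bound it needs exactly $n_L^2(\log n_L)/T=o(1)$; your closing diagnosis that the stated hypothesis is calibrated to the Hoeffding route is precisely right. Your multiplicative Chernoff bound instead exploits that $t_{i*}$ is $\textnormal{Binomial}(T,\rho_i)$ with mean of order $T/n_L$, producing an exponent of order $T/n_L$ rather than $T/n_L^2$, so the same conclusion holds under the weaker condition $n_L(\log n_L)/T=o(1)$. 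Since the lemma is only ever invoked under the stronger hypothesis (which is needed elsewhere in the consistency results), nothing is gained or lost in the paper's applications: your version is marginally sharper, the paper's is marginally more elementary, and both close the argument.
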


The proofs of the theorems in this section are given in the Appendix. Lemma \ref{thm:rho} states that the number of groups formed by a given hub node grows at a rate proportional to $T/n_L$ and all asymptotic results below are under this condition.  

The next lemma decomposes $L_G(z)-L_P(z)$ into two terms.

\begin{lemma}\label{thm:partition}
	Let $Ber(\cdot)$ denote the Bernoulli distribution and let $D(\hat{A}_{ij}|\bar{A}_{ij})$ be the Kullback--Leibler divergence of $Ber(\hat{A}_{ij})$ and $Ber(\bar{A}_{ij})$.  Then for all $z$,
	\begin{align}
	L_G(z)-L_P(z)=\sum_{i=1}^{n_L} t_i \sum_{j } D(\hat{A}_{ij}|\bar{A}_{ij})+B_{n_L,n,T}  \label{partition_weko}
	\end{align}
	where 
	\begin{align*}
	B_{n_L,n,T}=\sum_{i=1}^{n_L} t_i \left ( \sum_{j } (\hat{A}_{ij}-\bar{A}_{ij}) \log \frac{\bar{A}_{ij}}{1-\bar{A}_{ij}} \right ).
	\end{align*}
\end{lemma}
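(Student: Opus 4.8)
The plan is to prove the identity by a direct algebraic rearrangement; no probabilistic content is needed at this stage. The key observation is that although $L_G(z)$ and $L_P(z)$ are written as sums over groups $t$ and coordinates $j$, the summand for a fixed $t$ depends on $t$ only through its hub label $z^{(t)}$. So I would first reorganize both sums, collecting all groups $t$ with a common label $z^{(t)}=i$ and placing the sum over $i=1,\dots,n_L$ on the outside.

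Once the sums are grouped this way, the inner sum over $\{t : z^{(t)}=i\}$ collapses via the definitions of $\hat A_{ij}$ and $\bar A_{ij}$. Indeed $\sum_{t:z^{(t)}=i} G_j^{(t)} = t_i \hat A_{ij}$ and $\sum_{t:z^{(t)}=i} P_j^{(t)} = t_i \bar A_{ij}$, so each log-likelihood reduces to a weighted sum of binary-entropy expressions:
\[
L_G(z)=\sum_{i=1}^{n_L} t_i \sum_j \big[\hat A_{ij}\log\hat A_{ij}+(1-\hat A_{ij})\log(1-\hat A_{ij})\big],
\]
together with the analogous formula for $L_P(z)$ having $\bar A_{ij}$ in place of $\hat A_{ij}$.

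Subtracting, I would manufacture the divergence $D(\hat A_{ij}|\bar A_{ij})=\hat A_{ij}\log(\hat A_{ij}/\bar A_{ij})+(1-\hat A_{ij})\log((1-\hat A_{ij})/(1-\bar A_{ij}))$ by adding and subtracting $\hat A_{ij}\log\bar A_{ij}+(1-\hat A_{ij})\log(1-\bar A_{ij})$ inside the difference. This splits $L_G(z)-L_P(z)$ into the divergence term $\sum_i t_i\sum_j D(\hat A_{ij}|\bar A_{ij})$ plus a leftover, and the leftover collapses, after combining the coefficients of $\log\bar A_{ij}$ and $\log(1-\bar A_{ij})$, into $\sum_i t_i\sum_j (\hat A_{ij}-\bar A_{ij})\log\frac{\bar A_{ij}}{1-\bar A_{ij}}=B_{n_L,n,T}$, exactly as claimed.

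There is no deep obstacle here: the lemma is an exact bookkeeping identity. The only point requiring minor care is the regrouping-and-collapsing step, where one must check that the counts $t_i$ emerge correctly from the definitions of $\hat A_{ij}$ and $\bar A_{ij}$ and that all logarithm arguments lie in $(0,1)$ so that every expression is well defined. The substantive difficulty is deferred to what follows the lemma: bounding the divergence term and the remainder $B_{n_L,n,T}$ uniformly over $z$ is where the genuine probabilistic work will lie.
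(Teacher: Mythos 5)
Your proposal is correct and is essentially identical to the paper's own proof: both regroup the sums by hub label, collapse them via the definitions of $\hat{A}_{ij}$ and $\bar{A}_{ij}$ so that $L_G(z)$ and $L_P(z)$ become weighted binary-entropy sums, and then add and subtract the cross term $\hat{A}_{ij}\log\bar{A}_{ij}+(1-\hat{A}_{ij})\log(1-\bar{A}_{ij})$ to split the difference into $\sum_i t_i \sum_j D(\hat{A}_{ij}|\bar{A}_{ij})$ plus $B_{n_L,n,T}$. Nothing further is needed.
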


\begin{thm}\label{thm:uniform}
	For all $\eta>0 $,
	\begin{align*}
	\mathbb{P}(\max_z |L_G(z)-L_P(z)|\geq 2 \eta ) \leq n_L^T (T/n_L+1)^{n_L n} e^{-\eta}+4 n_L^T \exp \left \{ -\frac{\eta^2}{4(2nT+\eta)} \right \}.
	\end{align*}
\end{thm}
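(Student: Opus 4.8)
Proof proposal (for Theorem~\ref{thm:uniform}).
The plan is to start from the exact decomposition supplied by Lemma~\ref{thm:partition}, namely $L_G(z)-L_P(z)=\sum_i t_i\sum_j D(\hat A_{ij}\,\|\,\bar A_{ij})+B_{n_L,n,T}$, and to control each piece separately after a union bound over the $n_L^T$ possible label vectors $z$. Writing $\mathrm{(I)}=\sum_i t_i\sum_j D(\hat A_{ij}\,\|\,\bar A_{ij})\ge 0$ and $\mathrm{(II)}=B_{n_L,n,T}$, the bound $|L_G(z)-L_P(z)|\le \mathrm{(I)}+|\mathrm{(II)}|$ gives $\{\max_z|L_G(z)-L_P(z)|\ge 2\eta\}\subseteq\{\max_z \mathrm{(I)}\ge\eta\}\cup\{\max_z|\mathrm{(II)}|\ge\eta\}$, so it suffices to bound the two probabilities by the two summands in the statement. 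Throughout I would condition on the true labels $z_*$: given $z_*$ the quantities $P_j^{(t)}=A_{z_*^{(t)},j}$ and hence all the $\bar A_{ij}$ are deterministic, while the $G_j^{(t)}$ are mutually independent Bernoulli variables. This is what turns both pieces into sums/products of independent objects.

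For the divergence term $\mathrm{(I)}$ I would use the method of types. Fix $z$ and condition on $z_*$. For each $(i,j)$, $t_i\hat A_{ij}=\sum_{t:z^{(t)}=i}G_j^{(t)}$ is a sum of $t_i$ independent (non-identical) Bernoulli variables whose means average to $\bar A_{ij}$; applying the AM--GM inequality to the moment generating function $\prod_t(1-p_t+p_te^s)\le(1-\bar A_{ij}+\bar A_{ij}e^s)^{t_i}$ yields the Poisson--binomial point-mass bound $\mathbb P(\hat A_{ij}=k/t_i\mid z_*)\le e^{-t_iD(k/t_i\|\bar A_{ij})}$, and summing over the $t_i+1$ admissible values gives $\mathbb E[e^{t_iD(\hat A_{ij}\|\bar A_{ij})}\mid z_*]\le t_i+1$. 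Since the entries $\hat A_{ij}$ are independent across $(i,j)$ given $z_*$, the conditional MGF of $\mathrm{(I)}$ factorizes, so $\mathbb E[e^{\mathrm{(I)}}\mid z_*]\le\prod_i(t_i+1)^n\le(T/n_L+1)^{n_Ln}$, the last step by AM--GM on $\prod_i(t_i+1)$ using $\sum_i t_i=T$. A conditional Markov bound then gives $\mathbb P(\mathrm{(I)}\ge\eta\mid z_*)\le(T/n_L+1)^{n_Ln}e^{-\eta}$; this is deterministic in $z_*$, so it holds unconditionally for each fixed $z$, and a union bound over the $n_L^T$ label vectors produces the first summand.

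For the linear term $\mathrm{(II)}$ I would first rewrite it as a single sum of independent increments. Because $t_i(\hat A_{ij}-\bar A_{ij})=\sum_{t:z^{(t)}=i}(G_j^{(t)}-P_j^{(t)})$, we have $B_{n_L,n,T}=\sum_{t=1}^T\sum_{j=1}^n (G_j^{(t)}-P_j^{(t)})\,\ell_{z^{(t)},j}$ with $\ell_{ij}=\log\frac{\bar A_{ij}}{1-\bar A_{ij}}$. Given $z_*$ the coefficients $\ell_{ij}$ are deterministic and the $nT$ summands are independent, centered, and bounded in magnitude by $|\ell_{ij}|$, so a Bernstein-type inequality applies. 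The crux is the variance: the $(t,j)$ term contributes $\ell_{z^{(t)},j}^2\,P_j^{(t)}(1-P_j^{(t)})$, and summing over a cluster gives $\ell_{ij}^2\sum_{t:z^{(t)}=i}P_j^{(t)}(1-P_j^{(t)})\le\ell_{ij}^2\,t_i\min(\bar A_{ij},1-\bar A_{ij})$, which is $O(t_i)$ uniformly because $\sup_{p\in(0,1)}\min(p,1-p)\big(\log\tfrac{p}{1-p}\big)^2<\infty$. Hence the total variance is of order $nT$, and a two-sided Bernstein/Bennett bound followed by a union bound over $z$ yields the second summand $4n_L^T\exp\{-\eta^2/(4(2nT+\eta))\}$.

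The main obstacle is precisely this control of $\mathrm{(II)}$: the log-odds weights $\ell_{ij}$ blow up as $\bar A_{ij}\to0$ or $1$, so the individual increments are not uniformly bounded and a naive Hoeffding bound with the raw weights fails to deliver the $nT$ scale. The resolution is the variance-compensation inequality above, which trades the growth of $\ell_{ij}^2$ against the decay of $\min(\bar A_{ij},1-\bar A_{ij})$; care is also needed at the degenerate values $\bar A_{ij}\in\{0,1\}$, where the corresponding $G_j^{(t)}$ are almost surely constant so the term is interpreted as zero. With these points handled, the additive $\eta$ in the denominator comes from the sub-exponential Bernstein correction and the factor $4$ from combining the two tails with the union bound.
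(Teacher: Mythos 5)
Your treatment of the first term matches the paper's argument: the point-mass bound $\mathbb{P}(\hat A_{ij}=k/t_i\mid z_*)\le e^{-t_i D(k/t_i\|\bar A_{ij})}$ (the heterogeneous Chernoff bound of Dubhashi--Panconesi, which the paper imports via Choi et al.), conditional independence of the $\hat A_{ij}$ given $z_*$, the range count $\prod_i(t_i+1)^n\le (T/n_L+1)^{n_L n}$ by AM--GM, and a union bound over the $n_L^T$ labelings; your MGF-plus-Markov packaging is equivalent to the paper's direct summation over the range $\hat{\mathcal{A}}$. The decomposition via Lemma \ref{thm:partition} and the $2\eta=\eta+\eta$ split are also identical.

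The gap is in the second term, and it is precisely the point the paper flags as the key challenge. Your variance compensation --- the inequality $\sum_{t:z^{(t)}=i}P_j^{(t)}(1-P_j^{(t)})\le t_i\min(\bar A_{ij},1-\bar A_{ij})$ together with $\sup_{p}\min(p,1-p)\bigl(\log\tfrac{p}{1-p}\bigr)^2<\infty$ --- controls only the second moment. A Bernstein/Bennett bound additionally requires either a uniform bound on the summands or a moment-growth condition $\sum_k\mathbb{E}|X_k|^m\le \tfrac{m!}{2}\,v\,M^{m-2}$ with a \emph{fixed} scale $M$; the only bound on the increments you state is $|\ell_{ij}|$, which is not uniformly bounded over $z$ and over the parameter space (take all $P_j^{(t)}$ in a cluster equal to $1-\epsilon$: the variance proxy stays $O(t_i)$, but the increments can be of size $|\log \epsilon|$ with $\epsilon$ arbitrarily small). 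With $M=\max_{ij}|\ell_{ij}|$ unbounded, Bernstein does not produce the claimed denominator $4(2nT+\eta)$, so the step ``a two-sided Bernstein/Bennett bound yields the second summand'' does not follow from what you established. What rescues the argument --- and what the paper imports from Zhao (2020) --- is that the compensation holds at \emph{every} moment order, not just $m=2$: one needs $\min(p,1-p)\,\bigl|\log\tfrac{p}{1-p}\bigr|^m\le m^m e^{-m}\le m!$, which gives the aggregated Bernstein condition with $v=O(nT)$ and $M=O(1)$, equivalently the sub-gamma MGF bound $\mathbb{E}[e^{\lambda Y_j}\mid z_*]\le\exp\bigl\{T\lambda^2/(2(1-|\lambda|))\bigr\}$ for $|\lambda|<1$ that the paper invokes. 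Your sketch needs this all-moments (or MGF) version of the compensation; the $m=2$ case alone is not enough.
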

The bound of the first term in \eqref{partition_weko} is proved by a similar argument as in \cite{Choietal2011}. The key challenge is to bound the second term, $B_{n_L,n,T}$. The classical Hoeffding's inequality or Bernstein's inequality cannot be applied because $\log \frac{\bar{A}_{ij}}{1-\bar{A}_{ij}} $ may not be bounded uniformly on $z$ when  $P_j^{(t)}=1$ for many $j$.  \cite{zhao_bernstein} showed that the boundness\footnote{The boundness of $\log \frac{\bar{A}_{ij}}{1-\bar{A}_{ij}} $ for the asymmetric hub model can in fact be proved under certain technical conditions. However, the result does not hold for the hub model with the null component. Therefore, we simply apply the inequality in \cite{zhao_bernstein} to both cases. } of $\log \frac{\bar{A}_{ij}}{1-\bar{A}_{ij}} $ is a technical matter that can be circumvented, and proved a new Bernstein-type inequality that can be applied to our case. 

Now we state the result that $T_e/T$ is bounded by $L_P(z_*)-L_P(\hat{z})$. That is, $z_*$ is a \textit{well-separated} point of maximum of  $L_P$. The reader is referred to Section 5.2 in \cite{van2000asymptotic} for the classical case of this concept. 
\begin{thm}\label{thm:separate}
	Assume
	\begin{enumerate}[label=(\roman*)]
		\item there exists a set $V_i \subset \{1,...,n\}$ for $i=1,...,n_L$ such that $|V_i| \geq v n/n_L$ and $A_{ij}-A_{i'j} \geq d$ for all $j \in V_i$, $i\neq i'$, where $|\cdot|$ is the cardinality  of a set;
		\item  $A_{ii'}\leq c_0/n_L $ for $i=1,...,n_L$, $i'=1,...,n_L$, $i\neq i'$, where $c_0$ is a positive constant.
	\end{enumerate}
	Then if $(n_L^2 \log n_L)/T=o(1)$, for some positive constant $\delta$, 
	\begin{align*}
	\mathbb{P} \left (\frac{\delta  n_L }{d^2 v nT}(L_P(z_*)-L_P(\hat{z})) \geq \frac{T_e}{T} \right )\rightarrow 1, \quad \mbox{as } n_L\rightarrow \infty, n\rightarrow \infty, T\rightarrow \infty.
	\end{align*}
\end{thm}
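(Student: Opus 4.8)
The plan is to exploit the fact that $z_*$ is the \emph{exact} maximizer of the population profile likelihood $L_P$ and to quantify how much likelihood is lost per misclassified group. First I would rewrite the gap as a sum of Bernoulli Kullback--Leibler divergences. At $z=z_*$ the cluster averages in \eqref{def_A_bar} satisfy $\bar A_{ij}=A_{ij}$, because $P_j^{(t)}=A_{z_*^{(t)},j}$; substituting this into $L_P(z_*)-L_P(z)$ makes every per-group, per-coordinate term collapse into a divergence, giving
\begin{align*}
L_P(z_*)-L_P(z)=\sum_{t=1}^{T}\sum_{j=1}^{n} D\big(A_{z_*^{(t)},j}\,\big|\,\bar A_{z^{(t)},j}\big)\ \ge\ 0,
\end{align*}
so that $z_*$ is a point of maximum and the loss is exactly this sum.

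Next I would convert divergences into squared deviations. Pinsker's inequality for Bernoulli laws gives $D(p\,|\,q)\ge 2(p-q)^2$, hence
\begin{align*}
L_P(z_*)-L_P(z)\ \ge\ 2\sum_{t=1}^{T}\sum_{j=1}^{n}\big(A_{z_*^{(t)},j}-\bar A_{z^{(t)},j}\big)^2 .
\end{align*}
The point of this reduction is that $\bar A_{ij}=\sum_{i'}a_{i'i}A_{i'j}$ is a convex combination of the true rows, where $a_{i'i}$ is the fraction of the groups assigned to cluster $i$ whose true hub is $i'$. Restricting the inner sum to the signature set $V_{i_*}$ of the true hub $i_*=z_*^{(t)}$ and using condition (i), every $j\in V_{i_*}$ satisfies $A_{i_*,j}-\bar A_{ij}=\sum_{i'\ne i_*}a_{i'i}(A_{i_*,j}-A_{i'j})\ge d\,(1-a_{i_*i})$. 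Since $|V_{i_*}|\ge vn/n_L$, each such group contributes at least $2(vn/n_L)d^2(1-a_{i_*i})^2$, so that
\begin{align*}
L_P(z_*)-L_P(z)\ \ge\ \frac{2vnd^2}{n_L}\sum_{i_*=1}^{n_L}\sum_{i=1}^{n_L} n_{i_*i}\,(1-a_{i_*i})^2 ,
\end{align*}
where $n_{i_*i}$ counts groups with true hub $i_*$ assigned to cluster $i$ and $a_{i_*i}=n_{i_*i}/t_i$.

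The remaining task, and the step I expect to be the main obstacle, is a counting lemma relating the impurity functional $\sum_{i_*,i}n_{i_*i}(1-a_{i_*i})^2$ to the misclassification count $T_e$. Working cluster by cluster and splitting on whether a cluster has a majority true label, an elementary case analysis gives $\sum_{i_*}a_{i_*i}(1-a_{i_*i})^2\ge \tfrac14\big(1-\max_{i_*}a_{i_*i}\big)$, so the impurity dominates a fixed multiple of the number of groups lying outside the dominant label of their cluster. Turning this per-cluster count into $T_e$ requires identifying each cluster with its dominant true hub and checking that this identification is a genuine bijection---this is exactly where label identifiability enters, and it is the delicate point because $L_P$ is invariant under relabeling of the hubs, so a naive error count is not controlled by the likelihood gap. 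Here condition (ii), together with the constraint $A_{ii}=1$ (so that at the leader self-coordinate $j=i$ one has $A_{i,i}=1$ while $A_{i_*,i}\le c_0/n_L$ for $i_*\ne i$), is what prevents two distinct clusters from both being dominated by the same true hub, and Lemma \ref{thm:rho} guarantees $t_{i*}\asymp T/n_L$ for all $i$ so that the matching is stable.

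Assembling these pieces yields $L_P(z_*)-L_P(z)\ge (vnd^2/2n_L)\,T_e$ once the dominant-label alignment is shown to coincide with $z_*$; since the whole argument is deterministic given the separation conditions and holds for every admissible $z$ on the high-probability event of Lemma \ref{thm:rho}, it holds for $\hat z$ in particular. Rearranging and taking $\delta=2$ then gives
\begin{align*}
\frac{\delta n_L}{d^2vnT}\big(L_P(z_*)-L_P(\hat z)\big)\ \ge\ \frac{T_e}{T}
\end{align*}
with probability approaching $1$, which is the claim. The hard part, to reiterate, is the alignment/counting step: measuring error against the dominant-label matching and using the identifiability-type conditions (i)--(ii) and the diagonal structure $A_{ii}=1$ to force that matching to agree with $z_*$.
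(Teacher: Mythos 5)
Your first half---writing $L_P(z_*)-L_P(z)$ as a sum of Bernoulli Kullback--Leibler divergences, lower bounding by $2(p-q)^2$, and using the convex-combination identity $\bar A_{ij}=\sum_{i'}a_{i'i}A_{i'j}$ together with condition (i) to get a separation of order $d(1-a_{i_*i})$ on the signature sets $V_{i_*}$---is exactly the paper's route. The genuine gap is in the step you yourself flag as the hard part, and the resolution you sketch for it cannot work. The obstruction is not that two distinct clusters might be dominated by the same true hub; it is that the dominant-label map can be a nontrivial permutation. Take $z=\sigma(z_*)$ for a fixed-point-free permutation $\sigma$ of $\{1,\dots,n_L\}$: every cluster is perfectly pure, your impurity functional vanishes, $L_P(z_*)-L_P(\sigma(z_*))=0$, and yet $T_e=T$. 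Since both $L_P$ and the unconstrained profile likelihood $L_G$ are invariant under relabeling of clusters, no argument conducted at the level of these functionals and the population quantities $A_{ii}=1$, $A_{i_*i}\le c_0/n_L$ can force the alignment to be the identity: if such an argument worked it would apply equally to $\sigma(z_*)$, where the conclusion is false. For the same reason, your claim that the bound holds deterministically ``for every admissible $z$'' on the event of Lemma \ref{thm:rho} is incorrect.

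What actually breaks the relabeling symmetry, and what the paper's proof uses, is the feasibility of $\hat z$ with respect to the observed data: because the constraint $A_{ii}\equiv 1$ is imposed in the profile maximization, $\hat z^{(t)}=i$ is possible only when $G_i^{(t)}=1$. The paper combines this with condition (ii): conditional on $z_*^{(t)}=i$, no other leader appears in group $t$ with probability $\prod_{k\ne i}(1-A_{ik})\ge (1-c_0/n_L)^{n_L}$, which is bounded below by a positive constant depending only on $c_0$; for such a group the only feasible label is $i$, so it is necessarily classified correctly by any feasible assignment, in particular by $\hat z$. Hoeffding's inequality plus a union bound over $i$ (this is where $(n_L^2\log n_L)/T=o(1)$ and Lemma \ref{thm:rho} enter) then yields $t_{ii}/t_{i*}\ge\delta_1$ for all $i$ simultaneously with probability tending to one. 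This single data-level fact simultaneously rules out label swapping and gives the uniform purity bound $t_{ik}/t_k\le\delta_2<1$ for $i\ne k$, after which every misclassified group contributes at least $2d^2(1-\delta_2)^2\,vn/n_L$ to the gap and the theorem follows with $\delta=1/\bigl(2(1-\delta_2)^2\bigr)$; no dominant-label counting lemma is needed. To repair your proof you must replace the population-level alignment argument by this feasibility-plus-concentration argument (or an equivalent use of the realized data $G$); tightening the counting lemma alone cannot close the gap.
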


Condition (\romannumeral 1) implies that for every  leader there exists a set of nodes  that are more likely  to join  groups initiated by this particular leader than any other leader.    The size of this set is influenced by $v$ and the magnitude of this preference is influenced by $d$.  Here $d$ and $v$ can be either fixed numbers between 0 and 1 or quantities that go to 0. The rates will be specified when we give  results of label and parameter estimation consistency later in this section. 

Condition (\romannumeral 2) is a technical condition  that  prevents label swapping from influencing the following consistency results. 

Below we provide two versions of a theorem when applicable -- one  allows $n_L$  to go to infinity and the other assumes $n_L$ is fixed. We omit the proof for fixed $n_L$  because the proof is a trivial corollary of the case when $n_L$ goes to infinity.

\begin{manualtheorem}{\ref{thm:separate}$'$}\label{thm:separate_fixed}
	Assume
	\begin{enumerate}[label=(\roman*)]
		\item there exists a set $V_i \subset \{1,...,n\}$ for $i=1,...,n_L$ such that $|V_i| \geq v n/n_L$ and $A_{ij}-A_{i'j} \geq d$ for all $j \in V_i$, $i\neq i'$;
		\item  $A_{ii'}$ is bounded away from 1 for $i=1,...,n_L$, $i'=1,...,n_L$, $i\neq i'$.
	\end{enumerate}
	Then for some positive constant $\delta$, 
	\begin{align*}
	\mathbb{P} \left (\frac{\delta  }{d^2 v nT}(L_P(z_*)-L_P(\hat{z})) \geq \frac{T_e}{T} \right )\rightarrow 1, \quad \mbox{as }  n\rightarrow \infty, T\rightarrow \infty.
	\end{align*}
\end{manualtheorem}

Combining Theorem \ref{thm:uniform} and Theorem \ref{thm:separate} (Theorem \ref{thm:separate_fixed}), we establish label consistency: 
\begin{thm}\label{thm:label_consistency}
	Under the conditions of Theorem \ref{thm:separate}, if $(n_L^2 \log n_L)/T=o(1)$, $ (n_L^2\log T)/(d^2 v T)=o(1) $ and $ (n_L^2 \log n_L )/ (d^4 v^2 n)=o(1) $,
	\begin{align*}
	T_e/T=o_p(1), \quad \mbox{as } n_L\rightarrow \infty, n\rightarrow \infty, T\rightarrow \infty.
	\end{align*}
\end{thm}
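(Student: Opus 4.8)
The plan is to run the textbook consistency argument for an M-estimator, treating $\hat z=\argmax_z L_G(z)$ as the maximizer of the empirical criterion and $z_*$ as the well-separated maximizer of the population criterion $L_P$, and then to feed the two preceding theorems into this skeleton. The only nonroutine part is calibrating the slack parameter $\eta$ in Theorem \ref{thm:uniform} so that its tail bound vanishes while the rescaled fluctuation stays negligible.

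First I would establish the basic sandwich inequality. Since $\hat z$ maximizes $L_G$ we have $L_G(\hat z)\ge L_G(z_*)$, and the decomposition
\begin{align*}
L_P(z_*)-L_P(\hat z)=\big(L_P(z_*)-L_G(z_*)\big)+\big(L_G(z_*)-L_G(\hat z)\big)+\big(L_G(\hat z)-L_P(\hat z)\big)
\end{align*}
has a nonpositive middle term and two outer terms each bounded in absolute value by $\max_z|L_G(z)-L_P(z)|$, so $L_P(z_*)-L_P(\hat z)\le 2\max_z|L_G(z)-L_P(z)|$. Applying Theorem \ref{thm:separate} (whose hypotheses are exactly conditions (i)--(ii) in force here, and which requires $(n_L^2\log n_L)/T=o(1)$ through Lemma \ref{thm:rho}), on an event of probability tending to one,
\begin{align*}
\frac{T_e}{T}\le\frac{\delta n_L}{d^2 v nT}\big(L_P(z_*)-L_P(\hat z)\big)\le\frac{2\delta n_L}{d^2 v nT}\max_z|L_G(z)-L_P(z)|.
\end{align*}
It therefore suffices to show $\frac{n_L}{d^2 v nT}\max_z|L_G(z)-L_P(z)|=o_p(1)$.

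The crux is to choose a threshold $\eta=\eta_{n_L,n,T}$ so that Theorem \ref{thm:uniform} forces $\mathbb{P}(\max_z|L_G-L_P|\ge 2\eta)\to 0$ while the rescaling forces $\frac{n_L\eta}{d^2 v nT}\to 0$, i.e. $\eta=o(d^2 v nT/n_L)$. For the first term of the bound in Theorem \ref{thm:uniform} to vanish I need $\eta$ to dominate $T\log n_L+n_L n\log(T/n_L+1)$; under $(n_L^2\log n_L)/T=o(1)$ one has $T/n_L\to\infty$, so this is of order $T\log n_L+n_L n\log T$. For the Bernstein-type second term I need $\eta^2/(nT)\gg T\log n_L$, i.e. $\eta\gg T\sqrt{n\log n_L}$. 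The three rate assumptions open exactly the window these demands require: $(n_L^2\log T)/(d^2 v T)=o(1)$ gives $n_L n\log T\ll d^2 v nT/n_L$; $(n_L^2\log n_L)/(d^4 v^2 n)=o(1)$ gives $T\sqrt{n\log n_L}\ll d^2 v nT/n_L$; and the same condition forces $n\gg n_L^2\log n_L\ge\log n_L$, so $T\log n_L$ is dominated by $T\sqrt{n\log n_L}$. Writing the common lower threshold as $L=\max\{n_L n\log T,\,T\sqrt{n\log n_L}\}$ and the upper one as $U=d^2 v nT/n_L$, the conditions give $L\ll U$, so the choice $\eta=\sqrt{LU}$ satisfies $\eta/L\to\infty$ (killing the tail) and $\eta/U\to 0$ (killing the rescaled fluctuation); since $U=o(nT)$ the simplification $2nT+\eta\sim 2nT$ in the Bernstein term is also valid. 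Plugging this $\eta$ back shows each exponent in Theorem \ref{thm:uniform} diverges, hence the probability bound tends to zero, and the desired $o_p(1)$ statement follows.

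I expect the delicate bookkeeping of this last paragraph—verifying that a single admissible $\eta$ simultaneously drives all three exponential terms to zero and keeps $n_L\eta/(d^2 v nT)$ negligible, with the three stated rates being precisely what makes the window nonempty—to be the only real obstacle. Everything preceding it is the standard M-estimation sandwich and a direct invocation of Theorems \ref{thm:uniform} and \ref{thm:separate}.
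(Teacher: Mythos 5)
Your proposal is correct and takes essentially the same route as the paper's proof: both use Theorem \ref{thm:separate} to reduce label consistency to bounding $\frac{n_L}{d^2vnT}\left(L_P(z_*)-L_P(\hat z)\right)$, drop the nonpositive term $L_G(z_*)-L_G(\hat z)$ in the three-term decomposition, and then invoke Theorem \ref{thm:uniform} with $\eta$ of order $d^2vnT/n_L$, verifying that the three rate conditions make both exponential tails vanish. The only cosmetic difference is the calibration: the paper fixes $\epsilon>0$ and sets $\eta=\epsilon d^2vnT/n_L$, while you choose a single sequence $\eta=\sqrt{LU}$ inside the window $L\ll\eta\ll U$; the underlying rate checks are identical.
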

\begin{manualtheorem}{\ref{thm:label_consistency}$'$}
	Under the conditions of Theorem \ref{thm:separate_fixed}, if $(\log T)/(d^2 v T)=o(1) $ and $d^4 v^2 n\rightarrow \infty $,
	\begin{align*}
	T_e/T=o_p(1), \quad \mbox{as }  n\rightarrow \infty, T\rightarrow \infty.
	\end{align*}
\end{manualtheorem}

If we further assume $d$ and $v$ to be constants, we can give the cleanest version of the label consistency result:
\begin{manualtheorem}{\ref{thm:label_consistency}$''$}\label{thm:clean}
	Under the conditions of Theorem \ref{thm:separate_fixed} with $d$ and $v$ being constants,
	\begin{align*}
	T_e/T=o_p(1), \quad \mbox{as }  n\rightarrow \infty, T\rightarrow \infty.
	\end{align*}
\end{manualtheorem}
Theorem \ref{thm:clean} implies that as long as $T$ and $n$ both go to infinity, the rates do not matter for the purpose of proving label consistency. 

The next result  addresses the consistency for parameter estimation of $A$, which is based upon label consistency. First, we need a result that gives a faster decay rate of $T_e/T$ than Theorem \ref{thm:label_consistency}.
\begin{prop}\label{lemma:faster}
	Under the conditions of Theorem \ref{thm:separate}, if $(n_L^2 \log n_L)/T=o(1)$, $(n_L^3\log T)/(d^2 v T)=o(1) $ and $ (n_L^4 \log n_L )/ (d^4 v^2 n)=o(1) $,
	\begin{align*}
	n_L T_e/T=o_p(1), \quad \mbox{as } n_L\rightarrow \infty, n\rightarrow \infty, T\rightarrow \infty.
	\end{align*}
\end{prop}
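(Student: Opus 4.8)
The plan is to repeat the M-estimator scheme behind Theorem~\ref{thm:label_consistency}, but to push the threshold $\eta$ in Theorem~\ref{thm:uniform} a factor of $n_L$ closer to its critical value; the strengthened rate conditions are exactly what keep the uniform deviation bound small at this finer scale.

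First I would record the reduction. Since $\hat z$ maximizes the profile log-likelihood, $L_G(\hat z)\geq L_G(z_*)$, so
\[
L_P(z_*)-L_P(\hat z)\leq [L_P(z_*)-L_G(z_*)]+[L_G(\hat z)-L_P(\hat z)]\leq 2\max_z|L_G(z)-L_P(z)|.
\]
Combining this with the well-separation inequality of Theorem~\ref{thm:separate} (valid on an event of probability tending to $1$ under $(n_L^2\log n_L)/T=o(1)$) and multiplying by $n_L$ gives, on that event,
\[
\frac{n_L T_e}{T}\leq \frac{\delta n_L^2}{d^2 v n T}\bigl(L_P(z_*)-L_P(\hat z)\bigr)\leq \frac{2\delta n_L^2}{d^2 v n T}\max_z|L_G(z)-L_P(z)|.
\]
Hence it suffices to exhibit a threshold $\eta$ making the right-hand side $o_p(1)$.

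Next I would take $\eta=d^2 v n T/(n_L^2\omega)$ for a sequence $\omega\to\infty$ chosen below; this is a factor $1/n_L$ smaller than the threshold used for Theorem~\ref{thm:label_consistency}. With this choice the deterministic prefactor is $2\delta n_L^2\cdot(2\eta)/(d^2vnT)=4\delta/\omega\to 0$, so the claim follows once $\mathbb{P}(\max_z|L_G-L_P|\geq 2\eta)\to 0$ via Theorem~\ref{thm:uniform}. For the first term $n_L^T(T/n_L+1)^{n_Ln}e^{-\eta}$ I need $\eta$ to dominate $T\log n_L+n_Ln\log(T/n_L+1)$; using $\log(T/n_L+1)\leq\log T$, the ratios $\eta/(n_Ln\log T)=(d^2vT)/(n_L^3\omega\log T)$ and $\eta/(T\log n_L)=(d^2vn)/(n_L^2\omega\log n_L)$ both diverge, the first by condition $(n_L^3\log T)/(d^2vT)=o(1)$ and the second because $(n_L^4\log n_L)/(d^4v^2n)=o(1)$ together with $n\to\infty$ forces $(d^2vn)/(n_L^2\log n_L)\to\infty$. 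For the second term $4n_L^T\exp\{-\eta^2/(4(2nT+\eta))\}$, note $\eta=o(nT)$, so the exponent is of order $\eta^2/(nT)$ and its ratio to $T\log n_L$ is of order $\eta^2/(nT^2\log n_L)=(d^4v^2n)/(n_L^4\omega^2\log n_L)$, which diverges by $(n_L^4\log n_L)/(d^4v^2n)=o(1)$. Choosing $\omega\to\infty$ slowly enough to be dominated by all three of these divergent factors makes both terms vanish.

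The step carrying the real content is the control of the second term: taking $\eta$ a factor $n_L$ smaller shrinks the Bernstein-type exponent of Theorem~\ref{thm:uniform} by a factor $n_L^2$, which is precisely why condition (iii) must carry $n_L^4$ rather than $n_L^2$; this is where the extra factor of $n_L$ in the target $n_LT_e/T$ is paid for. Everything else mirrors the proof of Theorem~\ref{thm:label_consistency} verbatim: the well-separation and the bounds on $t_{i*}$ are inherited from Theorem~\ref{thm:separate} and Lemma~\ref{thm:rho}, and the condition $(n_L^2\log n_L)/T=o(1)$ is unchanged.
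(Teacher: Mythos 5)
Your proposal is correct and takes essentially the same route as the paper: the paper's own proof reduces the claim (via Theorem \ref{thm:separate} and the maximizer property of $\hat z$) to showing $\max_z \frac{n_L^2}{d^2 v nT}\left|L_G(z)-L_P(z)\right| = o_p(1)$, and then reruns the argument of Theorem \ref{thm:label_consistency} with the threshold in Theorem \ref{thm:uniform} scaled down by a factor of $n_L$ — exactly your computation. Your accounting of which strengthened rate condition controls which term of the uniform bound (condition (ii) for the entropy term, condition (iii) with its $n_L^4$ for the Bernstein-type term) matches what the paper's conditions are doing.
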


\begin{thm}\label{thm:estimation}
	Under the conditions of Proposition \ref{lemma:faster}, if $ (n_L \log n)/T=o(1) $,
	\begin{align*}
	\max_{i\in \{1,...,n_L\}, j \in\{1,...,n\}} \left|\hat{A}_{ij}^{\hat{z}}-A_{ij} \right| =o_p(1), \quad \mbox{as } n_L \rightarrow \infty, n \rightarrow \infty, T \rightarrow \infty. 
	\end{align*}
\end{thm}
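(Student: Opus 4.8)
The plan is to split the estimation error into an \emph{oracle} term that uses the true labels and a term that accounts for label misclassification,
\begin{align*}
\hat{A}_{ij}^{\hat{z}} - A_{ij} = \left( \hat{A}_{ij}^{\hat{z}} - \hat{A}_{ij}^{z_*} \right) + \left( \hat{A}_{ij}^{z_*} - A_{ij} \right),
\end{align*}
and to bound each piece uniformly in $(i,j)$ by a separate argument. For the oracle term I would note that, conditional on $z_*$, the summands $\{ G_j^{(t)} : z_*^{(t)}=i \}$ are i.i.d.\ $\mathrm{Ber}(A_{ij})$, so that $\hat{A}_{ij}^{z_*}$ is an average of $t_{i*}$ such variables. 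Hoeffding's inequality then gives $\mathbb{P}( | \hat{A}_{ij}^{z_*} - A_{ij} | > \epsilon \mid z_* ) \leq 2 \exp( -2 t_{i*} \epsilon^2 )$. On the high-probability event of Lemma \ref{thm:rho}, where $t_{i*} \geq T c_{\textnormal{min}}/n_L$, a union bound over the $n_L n$ entries yields
\begin{align*}
\mathbb{P}\left( \max_{i,j} | \hat{A}_{ij}^{z_*} - A_{ij} | > \epsilon \right) \leq 2 n_L n \exp\left( -\frac{2 c_{\textnormal{min}} T \epsilon^2}{n_L} \right) + o(1),
\end{align*}
which tends to zero for every fixed $\epsilon$ precisely because $(n_L \log n)/T = o(1)$.

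For the misclassification term I would set $\hat{t}_i = \sum_t 1(\hat{z}^{(t)}=i)$ and let $T_{e,i} = \sum_t | 1(\hat{z}^{(t)}=i) - 1(z_*^{(t)}=i) |$ count the groups on which the two labelings disagree at $i$, so that $\sum_i T_{e,i} \leq 2 T_e$ and $| \hat{t}_i - t_{i*} | \leq T_{e,i}$. Since the numerators of $\hat{A}_{ij}^{\hat{z}}$ and $\hat{A}_{ij}^{z_*}$ differ by at most $T_{e,i}$ (each disagreeing group changes the count by at most $G_j^{(t)} \leq 1$), a routine ratio-perturbation bound gives $| \hat{A}_{ij}^{\hat{z}} - \hat{A}_{ij}^{z_*} | \leq 2 T_{e,i}/\hat{t}_i$. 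By Proposition \ref{lemma:faster}, $n_L T_e/T = o_p(1)$, so on the event of Lemma \ref{thm:rho} the misclassification is small enough that $\hat{t}_i \geq t_{i*}/2 \geq T c_{\textnormal{min}}/(2 n_L)$ for all $i$; hence
\begin{align*}
\max_{i,j} | \hat{A}_{ij}^{\hat{z}} - \hat{A}_{ij}^{z_*} | \leq \frac{2 \max_i T_{e,i}}{\min_i \hat{t}_i} \leq \frac{8 n_L}{c_{\textnormal{min}}} \cdot \frac{T_e}{T} = o_p(1).
\end{align*}
Combining the two bounds would give the claim.

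The main obstacle will be making the misclassification term vanish. Ordinary label consistency, $T_e/T = o_p(1)$ from Theorem \ref{thm:label_consistency}, is \emph{not} sufficient here: because each hub accounts for only $\asymp T/n_L$ groups, the denominator $\hat{t}_i$ amplifies the misclassification count by a factor of $n_L$, so corrupting even a $T_e/T$ fraction of all groups can spoil a non-negligible fraction of a small cluster. This is exactly why the argument must invoke the sharper rate $n_L T_e/T = o_p(1)$ of Proposition \ref{lemma:faster} rather than Theorem \ref{thm:label_consistency}, and it is the reason that proposition was established. Matching the oracle requirement $(n_L \log n)/T = o(1)$ with the hypotheses of Proposition \ref{lemma:faster} produces the stated conditions.
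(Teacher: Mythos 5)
Your proposal is correct and follows essentially the same route as the paper's proof: the same decomposition into $(\hat{A}_{ij}^{\hat{z}}-\hat{A}_{ij}^{z_*})+(\hat{A}_{ij}^{z_*}-A_{ij})$, Hoeffding plus a union bound over the $n_L n$ entries on the event $t_{i*}\geq c_{\textnormal{min}}T/n_L$ for the oracle term, and a ratio-perturbation bound of order $n_L T_e/T$ for the misclassification term, killed by Proposition \ref{lemma:faster}. Your closing observation about why the sharper rate $n_L T_e/T=o_p(1)$ (rather than $T_e/T=o_p(1)$) is indispensable is exactly the role that proposition plays in the paper.
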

\begin{manualtheorem}{\ref{thm:estimation}$'$}
	Under the conditions of Theorem \ref{thm:separate_fixed}, if $(\log T)/(d^2 v T)=o(1) $, $d^4 v^2 n\rightarrow \infty $ and $ \log n/T=o(1) $,
	\begin{align*}
	\max_{i\in \{1,...,n_L\}, j \in\{1,...,n\}} \left|\hat{A}_{ij}^{\hat{z}}-A_{ij} \right| =o_p(1), \quad \mbox{as } n \rightarrow \infty, T \rightarrow \infty. 
	\end{align*}
\end{manualtheorem}

\section{THE HUB MODEL WITH THE NULL COMPONENT} \label{sec:null}
\subsection{Model setup}
In statistics, a null model generates data that match the basic features of the observed data, but which is otherwise a random process without structured patterns. In other words, a null model is the  degenerate case of the model class being studied. For example, the Erd\H{o}s-R\'{e}nyi random graph is the null model of stochastic block models (SBMs), i.e,  the SBM with only one community. The Newman--Girvan modularity \citep{Newman&Girvan2004} uses the configuration model as the null model in the criterion function for community detection. In regression analysis, a regression line with all regression coefficients being zero except the intercept can be viewed as a null model of multiple linear regression. 

The null model for grouped data, naturally, generates each group by independent Bernoulli trials. That is, if the grouping behavior is not governed by a network structure then every node is assumed to appear  independently in a group. The likelihood of $G^{(t)}$ under the null model is
\begin{equation*}
\mathbb{P}(G^{(t)})=\prod_{j=1}^n {\pi_{j}^{G_j^{(t)}} (1-\pi_{j})^{1-G_j^{(t)}}},
\end{equation*}
where $\pi_j$ is the probability that node $j$ appears in a group. 

The hub model studied in Section \ref{sec:classical} needs generalization to accommodate the null model because if there is only one component in \eqref{likelihood_classical}, say, node $i$ is the only leader, the likelihood of $G^{(t)}$ becomes 
\begin{equation*}
\mathbb{P}(G^{(t)})=\prod_{j=1}^n {A_{ij}^{G_j^{(t)}} (1-A_{ij})^{1-G_j^{(t)}}},
\end{equation*}
which is not a proper null model because of the assumption that $A_{ii} \equiv 1$  but $\pi_i$ is between 0 and 1. 

To better allow the hub model to degenerate to the null model, we add a null component to the hub model.  This null component  allows groups without hubs and nodes independently appear in such groups. We call this model the \textit{hub model with a null component}. We use $z^{(t)}=0$ to represent a hubless group. Under the hub model with a null component, each group $G^{(t)}$ is independently generated by the following two steps:
\begin{enumerate}[label=(\roman*)]
	\item The hub is sampled from a multinomial trial with parameter $\rho=(\rho_0,\rho_1,...,\rho_{n_L})$, i.e., $\mathbb{P}(z^{(t)}=i)=\rho_i$, with $\sum_{i=0}^{n_L} \rho_i=1$.
	
	\item If $z^{(t)}=i \in \{1,...,n_L\}$, then node $j$ will appear in the group independently with probability $A_{ij}$, i.e., $	\mathbb{P}(G_j^{(t)}=1|z^{(t)}=i)=A_{ij}$. If $z^{(t)}=0$, each node will independently join the group with probability $\pi_j$. 
\end{enumerate}
Note that the above model degenerates to the null model when $\rho_0=1$. As before we assume $A_{ii}\equiv 1$ for $i=1,...,n_L$.  The parameters for the hub model with a null component are
\begin{align*}
\rho & =(\rho_0,\rho_1,...,\rho_{n_L}), \\
A_{(n_L+1)\times n} & = \begin{pmatrix}
\pi_{1} &  \pi_2 & \cdots & \cdots  & \cdots & \cdots & \pi_n \\
1 &  A_{12} & \cdots & A_{1, n_L}  & A_{1,n_L+1} & \cdots & A_{1,n}\\
A_{21} & 1 & \cdots & A_{2,n_L} & A_{2,n_L+1} & \cdots & A_{2,n} \\
\vdots & \vdots & \ddots & \vdots & \vdots & \ddots & \vdots  \\ 
A_{n_L,1} & A_{n_L,2} & \dots & 1 & A_{n_L,n_L+1} & \cdots & A_{n_L,n}
\end{pmatrix}.
\end{align*}
Here the row indices of $A$ start from 0, i.e., $A_{0j}\equiv \pi_j$ for $j=1,...,n$. We will use $A_{0j}$ and $\pi_j$ interchangeably below. 
For  simplicity of notation, we use the same notation such as $\rho$ and $A$ for both the asymmetric hub model and the hub model with a null component when the meaning is clear from context. 

The new model has an advantage in data analysis in addition to  the theoretical benefit. Grouped data usually contain a number of tiny groups such as singletons and doubletons. When fitting the asymmetric hub model to such a data set, one sometimes has to include these nodes into the hub set due to the one-hub restriction. For example, a singleton must be included in the hub set and at least one node of a doubleton must be included, no matter how infrequently they appear in the data set, which may result in an unnecessarily large hub set. In the new model, these small groups can be treated as  hubless groups and the corresponding nodes may be removed from the hub set. Therefore, the model complexity is  significantly reduced.

\subsection{Identifiability under the hub model with a null component}
We study the identifiability of $\rho$ and $A$ under the new model in the same sense as Definition \ref{def:identi}. The parameter space of the hub model with a null component is $\mathcal{P}=\{(\rho,A) | 0\leq \rho_i \leq 1, i=0,...,n_L; A_{ii} = 1,i=1,...,n_L; 0\leq A_{ij}\leq 1, i=0,...,n_L, j=1,...,n, i\neq j   \}$. The parameters $(\rho,A)$ are identifiable if $\mathbb{P}(g|\tilde{\rho},\tilde{A}) \neq \mathbb{P}(g|\rho,A) $ 
for all realizations $g$ and parameters $(\tilde{\rho},\tilde{A}) \in \mathcal{P}$, $(\tilde{\rho},\tilde{A}) \neq (\rho,A)$.

Identifiability under the new model is much more difficult to prove than the case of the asymmetric hub model due to the extra null component in the model. In particular,  there is no constraint such as $\pi_i=1$ on  parameters of the null component. The conditions for identifiability in the following theorem are; however, as natural as those in Theorem \ref{thm:hub_iden}.
\begin{thm}\label{thm:hub_iden_null}
	The parameters $(\rho,A)$ of the hub model with a null component are identifiable under the following conditions:
	\begin{enumerate}[label=(\roman*)]
		\item $0<\rho_i<1$, for $i=0,...,n_L$;
		\item $A_{ij}<1$, for $i=0,...,n_L, j=1,...,n, i\neq j$;
		\item for any $i=1,...,n_L$, $i'=1,...,n_L,i\neq i'$, there exists $k \in \{n_L+1,...,n\}$ such that $A_{ik} \neq A_{i'k}$;
		\item for any $i=1,...,n_L$, there exist $k\in \{n_L+1,...,n\}$ and $k' \in \{n_L+1,...,n\}$ such that $\pi_k \neq A_{ik}$ and $\pi_{k'} \neq A_{ik'}$.
	\end{enumerate}
\end{thm}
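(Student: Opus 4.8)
The plan is to exploit the structural constraint $A_{ii}\equiv 1$, which prevents any leader from being the hub of a group in which it is absent, so that ``sparse'' realizations are generated almost entirely by the null component. I would first identify the null component completely, and then peel it off to reduce the problem to the already-solved asymmetric hub model of Theorem \ref{thm:hub_iden}. Concretely, suppose $\mathbb{P}(g\mid\rho,A)=\mathbb{P}(g\mid\tilde\rho,\tilde A)$ for every realization $g$. The empty group can only arise from the null component (any hub $i\ge 1$ forces $G_i=1$), so $\tilde\rho_0\prod_j(1-\tilde\pi_j)=\rho_0\prod_j(1-\pi_j)>0$; this both fixes the empty-group probability and forces $\tilde\rho_0>0$ and $\tilde\pi_j<1$. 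For a follower $k$, the event ``only $k$ appears'' likewise has no hub contribution, so dividing its probability by the empty-group probability yields $\tilde\pi_k/(1-\tilde\pi_k)=\pi_k/(1-\pi_k)$, i.e. $\tilde\pi_k=\pi_k$ for every follower $k$.

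The delicate step -- and the reason condition (iv) requires \emph{two} distinguishing followers -- is identifying $\pi_i$ for a leader $i$, since every group containing node $i$ receives contributions from both the null component and hub $i$. Fix a leader $i$ and the pair $k,k'$ supplied by condition (iv). I restrict attention to the eight realizations supported on $\{i,k,k'\}$ (all other coordinates $0$); these receive contributions only from the null component and hub $i$. The four realizations with $G_i=0$ are purely null and recover the already-known quantity $\rho_0(1-\pi_i)\prod_{m\neq i,k,k'}(1-\pi_m)$. The four realizations with $G_i=1$ form, as a function of $(G_k,G_{k'})$, a two-component mixture of product Bernoulli measures on $\{0,1\}^2$: one component with the known parameters $(\pi_k,\pi_{k'})$ and mass $w=\rho_0\pi_i\prod_{m\neq i,k,k'}(1-\pi_m)$, the other with parameters $(A_{ik},A_{ik'})$ and mass $u=\rho_i\prod_{m\neq i,k,k'}(1-A_{im})$. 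To extract $w$ I would subtract a candidate multiple $x$ of the known first component and impose that the residual $2\times2$ matrix be rank one (a product measure). Because the known component is itself a product measure, the quadratic terms cancel and this condition is \emph{linear} in $x$, with a unique root provided the coefficient of $x$ is nonzero; a direct computation shows that coefficient equals $u\,(A_{ik}-\pi_k)(A_{ik'}-\pi_{k'})$, which is nonzero precisely by condition (iv) (both factors nonzero) together with $u>0$ (conditions (i)--(ii)). Hence $w$ is determined, and combining $w=\rho_0\pi_i\prod_{m\neq i,k,k'}(1-\pi_m)$ with the known $\rho_0(1-\pi_i)\prod_{m\neq i,k,k'}(1-\pi_m)$ gives $\pi_i/(1-\pi_i)$, i.e. $\tilde\pi_i=\pi_i$. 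With a single distinguishing follower the analogous system is underdetermined, which is exactly why two are needed.

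Once $\tilde\pi_j=\pi_j$ holds for all $j$ and $\tilde\rho_0=\rho_0$ is recovered from the empty-group probability, I would form the residual $Q(g)=\mathbb{P}(g)-\rho_0\prod_j\pi_j^{g_j}(1-\pi_j)^{1-g_j}$. By construction $Q=\sum_{i=1}^{n_L}\rho_i\prod_j A_{ij}^{g_j}(1-A_{ij})^{1-g_j}$, and it is identical whether computed from the true or the tilde parameters. After dividing by $1-\rho_0>0$, $Q$ is exactly the likelihood of an asymmetric hub model with mixing weights $\rho_i/(1-\rho_0)$ and matrix $A$ restricted to the leader rows. Conditions (i)--(iii) of the present theorem are precisely conditions (i)--(iii) of Theorem \ref{thm:hub_iden} for this reduced model (for $n_L\ge 2$ one has $\rho_i<1-\rho_0$, so the weights lie strictly in $(0,1)$; the case $n_L=1$ has no leader pairs and is already complete), so Theorem \ref{thm:hub_iden} yields $\tilde A_{ij}=A_{ij}$ for all leaders $i$ and $\tilde\rho_i=\rho_i$, completing the proof.

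The main obstacle is the middle step: disentangling $\pi_i$ from hub $i$'s contribution on realizations containing the leader $i$. The key observation is that the rank-one residual condition is linear rather than quadratic in the subtracted null mass, and that its coefficient factors as $u\,(A_{ik}-\pi_k)(A_{ik'}-\pi_{k'})$; this both explains why condition (iv) demands two followers with $\pi\neq A$ and guarantees a unique solution. Everything else either parallels the proof of Theorem \ref{thm:hub_iden} or follows from the clean isolation afforded by $A_{ii}\equiv1$ on sparse realizations.
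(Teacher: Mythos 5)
Your proof is correct and follows the same three-stage route as the paper's own argument: identify the null component from hub-free realizations (the empty group and single-follower groups), then use the two followers $k,k'$ supplied by condition (iv) to pin down $\pi_i$ for each leader $i$, and finally peel off the null component and invoke Theorem \ref{thm:hub_iden}. The only substantive difference is how the middle step is executed. The paper writes the four relevant probabilities as a system in the masses $x=\rho_0\pi_i\prod_{j\le n_L,\,j\neq i}(1-\pi_j)$ and $y=\rho_i\prod_{j\le n_L,\,j\neq i}(1-A_{ij})$ and their tilde analogues (its Eqs.\ \eqref{temp1}--\eqref{temp4}), and eliminates by hand until the factor $(A_{ik}-\pi_k)(A_{ik'}-\pi_{k'})$ emerges, concluding $\tilde y=y$ and then $\tilde x=x$; you instead view the $G_i=1$ block as a two-component mixture of product measures on $\{0,1\}^2$, with identified table $M=wB_\pi+uB_A$, and characterize the null mass $w$ as the unique root of $x\mapsto\det(M-xB_\pi)$, which is linear with coefficient $-u(A_{ik}-\pi_k)(A_{ik'}-\pi_{k'})$ by the rank-one identity $\det\bigl(a\,pq^{\top}+b\,rs^{\top}\bigr)=ab\,\det[p\;r]\,\det[q\;s]$; since the tilde residual $\tilde u B_{\tilde A}$ is also singular, $\tilde w$ is a root and hence $\tilde w=w$. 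This is the same underlying algebra, but your packaging makes it transparent why exactly two distinguishing followers are needed and why the equation in $x$ is linear rather than quadratic, whereas the paper reaches the identical factor by direct substitution. (A cosmetic difference: the paper marginalizes over the remaining followers while you set them to zero; both yield equalities of identified quantities.) One point to tighten: for $n_L=1$ the reduced mixture has weight $\rho_1/(1-\rho_0)=1$, so Theorem \ref{thm:hub_iden} does not literally apply, and your phrase ``already complete'' should be replaced by the one-line remark that equality of two Bernoulli product measures forces equality of their parameters --- an edge case that the paper's own final reduction step glosses over as well.
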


Conditions (\romannumeral 1) - (\romannumeral 3) are identical to Theorem \ref{thm:hub_iden}. Condition (\romannumeral 4), adds the requirement that for any hub $i$, there exist two followers  which each have different probabilities of appearing in a group led by  hub $i$  than of appearing in a  hubless group. This condition implies that there should exist at least two more nodes in the  node set than in the hub set. This condition is  natural if one compares it to condition (\romannumeral 3) in Theorem \ref{thm:hub_iden}, as both imply that there exists at least one more column than rows in $A$.

The proof of Theorem \ref{thm:hub_iden_null} is given in the Appendix.

\subsection{Consistency of the maximum profile likelihood estimator for the hub model with a null component}\label{sec:null_consistency}

We  establish the consistency for the new model in the same setting as in Section \ref{sec:classical_consistency}. That is, we let $n_L\rightarrow \infty, n\rightarrow \infty, T\rightarrow \infty$ and treat  $z_*$ as a random vector. This proof is more challenging than the proof of consistency for  the asymmetric hub model due to the extra null component.

We adopt the notation in Section \ref{sec:classical_consistency}. Let $t_{i*}$ be the true number of groups  initiated by hub node $i$  and let $t_{0*}$ be the true number of  hubless groups, i.e., $t_{i*}=\sum_t 1(z_*^{(t)}=i)$, for $i=0,...,n_L$. We give bounds for $t_{i*}$ similarly to Lemma \ref{thm:rho}. The proof is omitted due to this similarity.
\begin{lemma}
	Assume $\min\{\rho_0,\rho_1,...,\rho_{n_L}\}=c_{\textnormal{min}*} /n_L$ and $\max\{\rho_0,\rho_1,...,\rho_{n_L}\}=c_{\textnormal{max}*} /n_L$ where $c_{\textnormal{min}}$ and $c_{\textnormal{max}}$ are positive constants. When $n_L^2 (\log n_L)/T=o(1)$, there exist positive constants $c_\textnormal{min}$ and $c_{\textnormal{max}}$ such that 
	\begin{align*}
	\frac{Tc_{\textnormal{min}}}{n_L} \leq t_{i*} \leq \frac{Tc_{\textnormal{max}}}{n_L}, \quad i=0,...,n_L,
	\end{align*} 
	with probability approaching 1. 
\end{lemma}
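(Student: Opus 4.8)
The plan is to mirror the proof of Lemma \ref{thm:rho}, the only structural difference being the inclusion of the null index $i=0$ in the range of $i$, which merely adds one term to the union bound. First I would observe that, since the groups are generated independently and each hub label $z_*^{(t)}$ is drawn from the multinomial distribution with parameter $\rho$, every count $t_{i*}=\sum_{t=1}^T 1(z_*^{(t)}=i)$ is a sum of $T$ i.i.d.\ Bernoulli$(\rho_i)$ indicators, hence $t_{i*}\sim\mathrm{Binomial}(T,\rho_i)$ with mean $\mathbb{E}[t_{i*}]=T\rho_i$. The hypotheses $\min_i \rho_i = c_{\textnormal{min}*}/n_L$ and $\max_i \rho_i = c_{\textnormal{max}*}/n_L$ then sandwich every mean: $T c_{\textnormal{min}*}/n_L \le T\rho_i \le T c_{\textnormal{max}*}/n_L$ for all $i=0,\dots,n_L$.

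Next I would control the fluctuation of each $t_{i*}$ about its mean by a concentration inequality suited to small success probabilities. Because the means $T\rho_i$ are only of order $T/n_L$, a naive Hoeffding bound at the natural additive scale is wasteful; instead I would invoke Bernstein's inequality, noting that each summand has variance $\rho_i(1-\rho_i)\le \rho_i \le c_{\textnormal{max}*}/n_L$, so the total variance is at most $T c_{\textnormal{max}*}/n_L$ while the summands are bounded by $1$. Taking the additive deviation to be $s=\varepsilon T/n_L$ for a small fixed $\varepsilon>0$ would yield
$$\mathbb{P}\!\left(|t_{i*}-T\rho_i|\ge \varepsilon T/n_L\right)\le 2\exp\!\left(-\frac{\varepsilon^2 T/n_L}{2(c_{\textnormal{max}*}+\varepsilon/3)}\right),$$
an exponential bound whose exponent is of order $T/n_L$.

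I would then fix $\varepsilon$ small enough that the interval $[T(c_{\textnormal{min}*}-\varepsilon)/n_L,\,T(c_{\textnormal{max}*}+\varepsilon)/n_L]$ is contained in $[Tc_{\textnormal{min}}/n_L,\,Tc_{\textnormal{max}}/n_L]$ for suitable constants $0<c_{\textnormal{min}}<c_{\textnormal{min}*}$ and $c_{\textnormal{max}}>c_{\textnormal{max}*}$, so that on the event $\{|t_{i*}-T\rho_i|<\varepsilon T/n_L\}$ the desired two-sided bound on $t_{i*}$ holds. A union bound over the $n_L+1$ indices $i=0,\dots,n_L$ gives overall failure probability at most $2(n_L+1)\exp(-c'T/n_L)$ with $c'=\varepsilon^2/[2(c_{\textnormal{max}*}+\varepsilon/3)]>0$; taking logarithms, this tends to $0$ as soon as $n_L(\log n_L)/T\to 0$. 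Since the stated hypothesis $n_L^2(\log n_L)/T=o(1)$ implies the weaker $n_L(\log n_L)/T=o(1)$, the conclusion follows with probability approaching $1$.

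Since the argument is an elementary concentration-plus-union-bound, there is no serious obstacle. The only points requiring a little care are that the success probabilities $\rho_i$ are vanishing (of order $1/n_L$), which rules out Hoeffding at the natural additive scale and makes Bernstein (or a multiplicative Chernoff bound) the right tool, and that the $(n_L+1)$-fold union bound must remain affordable, which is precisely what the assumed growth condition guarantees. The inclusion of the null index $i=0$ changes nothing, as $\rho_0$ obeys the same min/max bounds as the remaining $\rho_i$.
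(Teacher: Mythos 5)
Your proof is correct, but it takes a genuinely different route from the paper's. The paper omits the proof of this lemma entirely, pointing to the proof of Lemma \ref{thm:rho}: there, Hoeffding's inequality is applied directly to $t_{i*}/T-\rho_i$ at deviation scale $\epsilon/n_L$, giving a tail bound $2\exp\{-2T\epsilon^2/n_L^2\}$ whose exponent is of order $T/n_L^2$ --- which is exactly why the condition $n_L^2(\log n_L)/T=o(1)$ is assumed --- followed by a union bound over the hub labels and the choice $c_{\textnormal{min}}=c_{\textnormal{min}*}-\epsilon$, $c_{\textnormal{max}}=c_{\textnormal{max}*}+\epsilon$. You instead use Bernstein's inequality, exploiting the fact that each indicator has variance $\rho_i(1-\rho_i)\leq c_{\textnormal{max}*}/n_L$, which yields an exponent of order $T/n_L$ rather than $T/n_L^2$; as a result your union bound over the $n_L+1$ labels vanishes already under the weaker condition $n_L(\log n_L)/T=o(1)$, so your argument in fact establishes the lemma under a milder hypothesis than the one stated (which implies it). The only overstatement in your write-up is the claim that Hoeffding at the additive scale $\epsilon T/n_L$ is ``ruled out'': it is not --- the paper's own proof uses precisely that bound, and it suffices because the assumed rate condition $n_L^2(\log n_L)/T=o(1)$ is calibrated to absorb the weaker exponent. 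What Bernstein buys is a sharper rate requirement for this particular lemma, though the paper's stronger condition is assumed (and used elsewhere in the consistency results) in any case.
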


Let $z=(z^{(t)})_{t=1,...,T}$ be an arbitrary assignment of hub labels, where $z^{(t)} \in \{0,1,...,n_L \}$. Given $z$, $\hat{A}_{ij}$, $\bar{A}_{ij}$, $L_G(z,A)$, $L_P(z,A)$, $L_G(z)$ and $L_P(z)$ are defined as before. As in Section \ref{sec:classical_consistency}, we first bound the difference between $L_G(z)$ and $L_P(z)$. 

\begin{thm}\label{thm:uniform0}
	For all $\eta>0 $,
	\begin{align*}
	\mathbb{P}(\max_z |L_G(z)-L_P(z)|\geq 2\eta ) \leq & (n_L+1)^T (T/(n_L+1)+1)^{(n_L+1) n} e^{-\eta} \\
	& +4 (n_L+1)^T \exp \left \{ -\frac{\eta^2}{4(2nT+\eta)} \right \}.
	\end{align*}
\end{thm}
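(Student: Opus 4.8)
The plan is to follow the proof of Theorem \ref{thm:uniform} essentially verbatim, tracking the two bookkeeping changes forced by the null component: the label set is now $\{0,1,\dots,n_L\}$, so each group has $n_L+1$ possible labels and there are $(n_L+1)^T$ assignments $z$ in total, and the matrix $A$ carries an extra row $0$ with $A_{0j}=\pi_j$, so every sum over classes runs over $i=0,\dots,n_L$. First I would record that the decomposition of Lemma \ref{thm:partition} holds unchanged here with $i$ ranging over $\{0,\dots,n_L\}$, since row $0$ contributes exactly the same Bernoulli log-likelihood structure as any hub row; that is, $L_G(z)-L_P(z)=\sum_{i=0}^{n_L} t_i\sum_j D(\hat{A}_{ij}|\bar{A}_{ij})+B_{n_L,n,T}$. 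Because $|L_G(z)-L_P(z)|\ge 2\eta$ forces one of the two summands to have magnitude at least $\eta$, it suffices to bound $\mathbb{P}(\max_z \sum_{i}t_i\sum_j D(\hat{A}_{ij}|\bar{A}_{ij})\ge\eta)$ and $\mathbb{P}(\max_z|B_{n_L,n,T}|\ge\eta)$ separately and add.

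For the KL term I would argue as in \cite{Choietal2011}. Fix an assignment $z$ and condition on $z_*$. Then the entries $\hat{A}_{ij}$ are mutually independent across all pairs $(i,j)$ with $i\in\{0,\dots,n_L\}$: independent across $j$ because nodes appear independently within a group, and independent across $i$ because distinct classes under $z$ use disjoint sets of groups. Each $t_i\hat{A}_{ij}$ is a sum of $t_i$ independent Bernoulli variables with average mean $\bar{A}_{ij}$, so the method-of-types bound gives $\mathbb{E}\,e^{t_i D(\hat{A}_{ij}|\bar{A}_{ij})}\le t_i+1$. Multiplying over the $(n_L+1)n$ independent pairs and applying the arithmetic--geometric mean inequality to $\sum_{i=0}^{n_L}(t_i+1)=T+n_L+1$ yields $\prod_{i=0}^{n_L}(t_i+1)^n\le (T/(n_L+1)+1)^{(n_L+1)n}$; since this bound is free of $z_*$, Markov's inequality and a union bound over the $(n_L+1)^T$ assignments produce the first term $(n_L+1)^T(T/(n_L+1)+1)^{(n_L+1)n}e^{-\eta}$.

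For $B_{n_L,n,T}$, I would write $t_i(\hat{A}_{ij}-\bar{A}_{ij})=\sum_{t:z^{(t)}=i}(G_j^{(t)}-P_j^{(t)})$, so that for fixed $z$ the quantity $B_{n_L,n,T}$ is a sum of $nT$ centered, independent Bernoulli deviations weighted by the log-odds $\log\{\bar{A}_{ij}/(1-\bar{A}_{ij})\}$. These weights can blow up as $\bar{A}_{ij}\to 1$, and — crucially for the null component — cannot be shown bounded in the way they can for the pure hub model (see the footnote following Theorem \ref{thm:uniform}), precisely because the null row has no constraint $\pi_i\equiv 1$. I would therefore invoke the Bernstein-type inequality of \cite{zhao_bernstein}, which controls such a weighted sum without a uniform bound on the weights, to obtain for each fixed $z$ a tail of order $4\exp\{-\eta^2/(4(2nT+\eta))\}$, the $2nT$ arising as the variance proxy from the $nT$ summands. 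A union bound over the $(n_L+1)^T$ assignments gives the second term, and adding the two bounds completes the proof.

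I expect the only genuinely delicate point — and the step I would verify most carefully — to be this second term: one must confirm that the inequality of \cite{zhao_bernstein} applies uniformly over all $z$, \emph{including} the null row $0$, where $\bar{A}_{0j}=\bar\pi_j$ is unconstrained and so the usual boundedness argument is unavailable. Everything else is routine re-indexing of the Theorem \ref{thm:uniform} argument from $n_L$ classes to $n_L+1$.
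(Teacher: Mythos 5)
Your proposal is correct and is essentially the paper's own proof: the paper omits the argument for Theorem \ref{thm:uniform0}, stating only that it follows Theorem \ref{thm:uniform} with minor modifications, and your write-up carries out exactly that re-indexing from $n_L$ to $n_L+1$ classes, including the correct count $(n_L+1)^T$ of assignments, the AM--GM bound $(T/(n_L+1)+1)^{(n_L+1)n}$ on the range of $\hat{A}$, and the appeal to the Bernstein-type inequality of \cite{zhao_bernstein} for $B_{n_L,n,T}$ precisely because the log-odds weights cannot be bounded when the null row is present (the point of the paper's footnote). Your phrasing of the Kullback--Leibler term via the moment generating function bound $\mathbb{E}\,e^{t_i D(\hat{A}_{ij}|\bar{A}_{ij})}\le t_i+1$ plus Markov is an equivalent rendering of the method-of-types argument the paper takes from \cite{Choietal2011}, not a different route.
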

The proof is omitted as it is similar to Theorem \ref{thm:uniform} with minor modifications. 

We now  prove a result on the separation of $L_P(z_*)$ from $L_P(\hat{z})$ which is similar to Theorem \ref{thm:separate}. However, the technique in the original proof cannot be directly applied to the new model. A key step in the proof of Theorem \ref{thm:separate} relies on the fact that we can obtain a non-zero lower bound for the number of correctly classified groups with node $i$ as the hub node in the asymmetric hub model. Specifically, let $t_{ii}$ be the number of correctly classified groups  where node $i$ is the hub node. For  the asymmetric hub model, we obtained a lower bound for $t_{ii}/t_{i*}\,\, (i=1,...,n_L)$ in the proof of Theorem \ref{thm:separate} from the fact that a node cannot be  labeled as the  hub of a particular group if the node does not appear in the group. This is due to the assumption $A_{ii} \equiv 1$ for $i=1,...,n_L$. For the hub model with a null component,  the lower bound for $t_{ii}/t_{i*}$ cannot be proved by the same technique  because  all groups can be  classified as  hubless groups without violating the assumption $A_{ii}\equiv 1$. 

We take a different path in the proof to overcome this issue and other technical difficulties due to the null component. The proofs of the theorems in this section are given in the Appendix. We first bound $t_{i0}/t_{i*}$ for $i=1,...,n$. 
\begin{thm}\label{thm:ti0}
	Assume
	\begin{enumerate}[label=(\roman*)]
		\item there exists a set $V_i \subset \{1,...,n\}$ for $i=1,...,n_L$ such that $|V_i| \geq v n/n_L$ and $A_{ij}-A_{i'j} \geq d$ for all $j \in V_i$, $i\neq i'$;
		\item  $A_{ii'}\leq c_0/n_L $ for $i=0,...,n_L$, $i'=1,...,n_L$, $i\neq i'$, where $c_0$ is a positive constant.
	\end{enumerate}
	Then if $(n_L^2 \log n_L)/T=o(1)$, $ (n_L^5\log T)/(d^2 v T)=o(1) $ and $ (n_L^8 \log n_L )/ (d^4 v^2 n)=o(1) $, for all $\eta>0$,
	\begin{align*}
	\frac{t_{i0}}{t_{i*}}\leq\eta, \quad i=1,...,n_L,
	\end{align*}
	with probability approaching 1.
\end{thm}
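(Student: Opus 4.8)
The plan is to run the standard M-estimation argument, but to replace the ``a hub always appears in its own group'' counting bound (which is unavailable here) by a strong-concavity Jensen-gap estimate localized to the coordinates that distinguish hub $i$. First I would record the population optimality of $z_*$ in a usable form. Writing $H(x)=-x\log x-(1-x)\log(1-x)$ for the binary entropy, a direct computation from the definition \eqref{def_A_bar} of $\bar A_{ij}$ gives
\begin{align*}
L_P(z)=-\sum_{i=0}^{n_L} t_i \sum_{j=1}^n H(\bar A_{ij}),
\end{align*}
so that, grouping the summands by the pair (true label $a$, assigned label $b$) and using that $\bar A_{bj}$ is the $t_b$-average of the $A_{aj}$ feeding bucket $b$, concavity of $H$ yields $L_P(z_*)\ge L_P(z)$ with the difference equal to a sum of nonnegative per-bucket Jensen gaps. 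Since $\hat z=\argmax_z L_G(z)$ we have $L_G(\hat z)\ge L_G(z_*)$, and Theorem \ref{thm:uniform0} converts this into $L_P(z_*)-L_P(\hat z)\le 2\eta$ with probability approaching $1$ for a suitable $\eta$ under the stated rates. The problem is thereby reduced to producing a lower bound on this gap proportional to $t_{i0}$.

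The main obstacle is exactly the one flagged before the statement: because all groups may be routed into the null bucket without violating $A_{ii}\equiv1$, there is no cheap lower bound on correctly classified counts, so the ``cost'' of sending hub-$i$ groups to label $0$ must instead be extracted from the within-bucket variance those groups create. I would extract it on two kinds of coordinates. On node $i$ itself, every hub-$i$ group contributes the value $A_{ii}=1$, whereas by condition (ii) every true null group and every other hub-$i'$ group contributes a value at most $c_0/n_L$ on coordinate $i$; on the discriminating set $V_i$ of condition (i), hub-$i$ groups exceed all competitors by at least $d$. Using $-H''\ge 4$, the Jensen gap of the null bucket on such a coordinate is bounded below by $2\sum_a t_{a0}(A_{aj}-\bar A_{0j})^2$, and isolating the hub-$i$ contribution against the rest of the bucket turns this into a between-group variance of the form (fraction of hub-$i$ groups)$\times$(fraction of baseline mass)$\times$(separation)$^2$.

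The delicate step — and where the strong rate conditions $(n_L^5\log T)/(d^2vT)=o(1)$ and $(n_L^8\log n_L)/(d^4 v^2 n)=o(1)$ enter — is that this variance lower bound is only useful if the null bucket actually retains its low-value baseline, i.e.\ if the genuine null groups are not themselves scattered away into hub buckets. I would therefore first prove a preliminary global label-consistency bound, obtained by the same profile-likelihood/separation machinery as Theorem \ref{thm:separate} and Proposition \ref{lemma:faster} but carried out for the null-component model, guaranteeing that under the stated rates every bucket is dominated by its intended true class and in particular that $t_{00}\asymp t_{0*}\ge Tc_{\min}/n_L$. With the baseline pinned down, the variance estimate yields a per-bucket gap of order $t_{i0}/n_L$ (sharpened to order $d^2(vn/n_L)\,t_{i0}$ when one works on $V_i$), uniformly in $i$ after a union bound. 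Combining this with $L_P(z_*)-L_P(\hat z)\le 2\eta$ and the lower bound $t_{i*}\ge Tc_{\min}/n_L$ from the preceding lemma then forces $t_{i0}/t_{i*}\le\eta$ simultaneously for all $i=1,\dots,n_L$ with probability approaching $1$. The hardest part is making the ``baseline is retained'' step rigorous without circularity, since the global bound and the per-hub bound both refer to the same estimated assignment $\hat z$; the resolution is to prove the crude global bound first, feed it in as a structural fact about $\hat z$, and only then run the refined per-coordinate variance argument.
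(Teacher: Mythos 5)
Your overall architecture --- an M-estimation argument combining the uniform deviation bound of Theorem \ref{thm:uniform0} with a separation lower bound that expresses $L_P(z_*)-L_P(\hat z)$ as within-bucket variance on discriminating coordinates, with strong concavity of the binary entropy playing the role of the paper's Kullback--Leibler/quadratic bound --- is the same as the paper's, and your identification of the key requirement (the null bucket must retain its genuine null groups) is exactly right. The gap is in how you propose to secure that requirement. You want to first prove a ``preliminary global label-consistency bound \dots by the same profile-likelihood/separation machinery as Theorem \ref{thm:separate} and Proposition \ref{lemma:faster} but carried out for the null-component model, guaranteeing that \dots every bucket is dominated by its intended true class.'' That machinery is precisely what is unavailable here: the proof of Theorem \ref{thm:separate} starts from the feasibility bound $t_{ii}\geq\delta_1 t_{i*}$, which follows from $A_{ii}\equiv 1$ in the asymmetric model because an estimated hub must appear in its own group, but fails in the null-component model because every group can be feasibly assigned the label $0$. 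This is the very obstruction the paper flags before stating Theorem \ref{thm:ti0}, and it is why label consistency for this model (Theorem \ref{thm:label_consistency_null}) is proved downstream of Theorem \ref{thm:ti0}, using it as input. So your resolution of the circularity --- ``prove the crude global bound first'' --- is itself circular: the crude global bound is a consequence of, not an input to, the result you are proving.

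The fix is that you need far less than global label consistency, and the piece you do need comes from feasibility alone. A true null group in which no leader appears can only be assigned label $0$; by condition (ii) the probability that a null group contains no leader is at least $(1-c_0/n_L)^{n_L}\geq e^{-c_0}$, so Hoeffding's inequality gives $t_{00}\geq\delta_1 t_{0*}$ with probability tending to one (and, by the same token, $t_{ii}+t_{i0}\geq\delta_1 t_{i*}$ for each hub $i$, which the paper also records). This pins down the baseline without any appeal to consistency of $\hat z$. One quantitative point then needs care: since groups from all $n_L$ hubs may be dumped into bucket $0$, $t_0$ can be of order $T$ while $t_{00}$ is only guaranteed to be of order $T/n_L$, so the separation on $j\in V_i$ is $A_{ij}-\bar A_{0j}\geq d\,t_{00}/t_0 \geq c\, d/n_L$ for a constant $c$, not of order $d$; the per-bucket gap is therefore of order $d^2 v n\, t_{i0}/n_L^3$ rather than your claimed $d^2(vn/n_L)t_{i0}$. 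This dilution factor is exactly what the exponents $n_L^5$ and $n_L^8$ in the hypothesized rates are there to absorb, so with the feasibility-based baseline and the corrected exponent your variance argument goes through and coincides with the paper's proof.
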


Based on the result in Theorem \ref{thm:ti0}, we establish the label consistency for the hub model with a null component.

\begin{thm}\label{thm:label_consistency_null}
	Under the conditions of Theorem \ref{thm:ti0}, 
	\begin{align*}
	\frac{T_e}{T} =o_p(1), \quad \mbox{as } n_L\rightarrow \infty, n\rightarrow \infty, T\rightarrow \infty.
	\end{align*}
\end{thm}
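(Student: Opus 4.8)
The plan is to run the standard M-estimator argument for the profile likelihood, exactly as for the asymmetric case (Theorem \ref{thm:label_consistency}), but with the separation step rebuilt around Theorem \ref{thm:ti0}. First I would exploit the optimality of $\hat z = \argmax_z L_G(z)$: since $L_G(\hat z) \ge L_G(z_*)$, writing $L_P(z_*) - L_P(\hat z) = [L_P(z_*) - L_G(z_*)] + [L_G(z_*) - L_G(\hat z)] + [L_G(\hat z) - L_P(\hat z)]$ and dropping the nonpositive middle bracket gives
\[
L_P(z_*) - L_P(\hat z) \le 2 \max_z |L_G(z) - L_P(z)|.
\]
By Theorem \ref{thm:uniform0}, choosing $\eta$ to balance the two exponential tails against the combinatorial factors $(n_L+1)^T$ and $(T/(n_L+1)+1)^{(n_L+1)n}$ shows that, under the stated rates $(n_L^2\log n_L)/T = o(1)$, $(n_L^5 \log T)/(d^2 v T) = o(1)$ and $(n_L^8 \log n_L)/(d^4 v^2 n) = o(1)$, the right-hand side is $o_p$ of the separation scale $d^2 v n T / n_L$. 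This reduces everything to showing that $z_*$ is a \emph{well-separated} maximizer of $L_P$, i.e. a lower bound of the form $L_P(z_*) - L_P(\hat z) \gtrsim (d^2 v n / n_L)\, T_e$ up to a correction for groups sent to the null label.

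For the separation bound I would use the identity $L_P(z_*) - L_P(z) = \sum_t \sum_j D(P_j^{(t)} | \bar A_{z^{(t)},j})$, which holds because $\bar A_{z_*^{(t)},j} = P_j^{(t)} = A_{z_*^{(t)},j}$ at the truth. I would then split $T_e$ into three kinds of mislabeled groups: hub-to-null ($z_*^{(t)} = i \ge 1$, $\hat z^{(t)} = 0$), hub-to-hub ($z_*^{(t)}=i\ge1$, $\hat z^{(t)} = i' \ge 1$, $i'\ne i$), and null-to-hub ($z_*^{(t)} = 0$, $\hat z^{(t)} = i' \ge 1$). The hub-to-null count is handled directly by Theorem \ref{thm:ti0}: since $t_{i0}/t_{i*} \le \eta$ and $t_{i*} \le T c_{\textnormal{max}}/n_L$ by the lemma bounding $t_{i*}$, summing over $i$ shows that for any $\eta$, with probability approaching $1$, $\sum_{i\ge 1} t_{i0}/T \le \eta c_{\textnormal{max}}$, so this piece is $o_p(1)$. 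For the other two kinds I would invoke condition (\romannumeral 1): on the distinguishing set $V_i$ (with $|V_i| \ge vn/n_L$ and $A_{ij} - A_{i'j} \ge d$) the quadratic bound $D(p|q) \ge 2(p-q)^2$ forces each such mislabeled group to contribute at least $\approx 2 d^2 |V_i| \gtrsim d^2 v n/n_L$ to $L_P(z_*) - L_P(\hat z)$, \emph{provided} the cluster means $\bar A_{i',j}$ induced by $\hat z$ stay within $o(d)$ of the true $A_{i'j}$.

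The main obstacle is precisely this proviso, and it is where the null component makes the argument genuinely different from Theorem \ref{thm:separate}. In the asymmetric model the constraint $A_{ii}\equiv1$ supplied a free lower bound on $t_{ii}/t_{i*}$ (a node cannot be the hub of a group it does not appear in), which kept each estimated cluster populated mostly by its own true members and hence kept $\bar A$ informative. Here the null label can absorb any group without violating any constraint, so a priori the estimated clusters could be badly contaminated and the quadratic separation could collapse. Theorem \ref{thm:ti0} is exactly the replacement: by certifying that only a vanishing fraction of each true hub's groups is diverted to the null label, it guarantees that the candidate clusters remain dominated by correctly-typed groups, so that $\bar A_{i',j}$ cannot drift from $A_{i'j}$ by more than $o(d)$ and the per-group contribution $\gtrsim d^2 v n/n_L$ survives for the hub-to-hub and null-to-hub terms. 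Combining the $o_p(d^2 v n T/n_L)$ upper bound on $L_P(z_*) - L_P(\hat z)$ with this lower bound, then adding back the already-controlled hub-to-null term, yields $T_e/T = o_p(1)$. The delicate bookkeeping — propagating the Theorem \ref{thm:ti0} bound through the cluster-mean drift for all three misclassification types simultaneously and uniformly over $i,i'$ — is the part I expect to require the most care.
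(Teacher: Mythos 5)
Your overall skeleton (the M-estimator reduction $L_P(z_*)-L_P(\hat z)\le 2\max_z|L_G(z)-L_P(z)|$, the uniform bound from Theorem \ref{thm:uniform0}, the three-way split of $T_e$, and handling the hub-to-null count $t_{i0}$ directly via Theorem \ref{thm:ti0}) matches the paper. The genuine gap is in your treatment of the null-to-hub groups ($z_*^{(t)}=0$, $\hat z^{(t)}=i'$), i.e.\ the counts $t_{0i'}$. Your per-group lower bound $\gtrsim d^2vn/n_L$ requires $|\pi_j-\bar A_{i'j}|\gtrsim d$ on $V_{i'}$, which you derive from the proviso that $\bar A_{i'j}$ stays within $o(d)$ of $A_{i'j}$. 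Theorem \ref{thm:ti0} does not deliver that proviso: it bounds $t_{i0}/t_{i*}$ (true-hub groups diverted \emph{to} the null label), which together with feasibility only yields $t_{i'i'}\ge \delta_2 t_{i'*}$. It says nothing about the contamination \emph{of} cluster $i'$: by feasibility and condition (ii) the hub-to-hub contamination satisfies only $\sum_{k\ne i'}t_{ki'}\lesssim T c_0/n_L$, the same order as the cluster size $t_{i'}$, so its fraction is merely $O(1)$; and the null contamination fraction $t_{0i'}/t_{i'}$ is exactly the quantity you are trying to bound, so using it to certify the drift is circular. Since the conditions allow $d\to 0$, neither contamination fraction can be asserted to be $o(d)$, and the proviso fails.

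The paper's proof avoids needing any such accuracy of $\bar A$. For groups with true label $i\ge 1$ it only needs that no cluster is dominated by a single wrong class ($t_{ik}/t_k\le\delta_3<1$, which does follow from $t_{kk}\ge\delta_2 t_{k*}$), giving separation $A_{ij}-\bar A_{kj}\ge d(1-\delta_3)$ and hence \eqref{part1}. For the null-to-hub term it uses a different mechanism altogether: the contaminating null groups drag $\bar A_{ij}$ \emph{down} on $V_i$ by at least $d\,n_L t_{0i}/(\delta_4 T)$ (all cross terms have the right sign, so no cancellation), and this distortion is paid for by the $t_{ii}\gtrsim T/n_L$ \emph{correctly} classified groups in cluster $i$, yielding $L_P(z_*)-L_P(\hat z)\ge (d^2vnT/\delta' n_L^2)\,(n_Lt_{0i}/T)^2$ as in \eqref{sep22} --- a bound quadratic in the contamination fraction, which combined with the uniform bound forces $\max_i n_L t_{0i}/T=o_p(1)$ in \eqref{part2}. (This step also requires the intermediate estimate \eqref{novel} to control the cluster size $t_i$.) This ``contamination cost charged to the correct groups'' argument is the key idea missing from your proposal; your hub-to-hub treatment over-asks as well but is patchable by the weaker domination requirement, whereas the null-to-hub term is not controlled at all without the paper's (or an equivalent) device.
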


\begin{manualtheorem}{\ref{thm:label_consistency_null}$'$}\label{thm:label_null_fixed}
	Assume
	\begin{enumerate}[label=(\roman*)]
		\item there exists a set $V_i \subset \{1,...,n\}$ for $i=1,...,n_L$ such that $|V_i| \geq v n/n_L$ and $A_{ij}-A_{i'j} \geq d$ for all $j \in V_i$, $i\neq i'$;
		\item  $A_{ii'}$ is bounded away from 1 for $i=0,...,n_L$, $i=1,...,n_L$, $i\neq i'$.
	\end{enumerate}
	If $ (\log T)/(d^2 v T)=o(1) $ and $d^4 v^2 n\rightarrow \infty $, then
	\begin{align*}
	\frac{T_e}{T} =o_p(1), \quad \mbox{as } n\rightarrow \infty, T\rightarrow \infty.
	\end{align*}	
\end{manualtheorem}

We conclude this section by the result on consistency for parameter estimation of $A$. 
\begin{thm}\label{thm:estimation_null}
	Under the conditions of Theorem \ref{thm:ti0}, if $(n_L^2 \log n_L)/T=o(1)$, $ (n_L^5\log T)/(d^2 v T)=o(1) $, $ (n_L^8 \log n_L )/ (d^4 v^2 n)=o(1)$ and $(n_L\log n)/T=o(1)$,
	\begin{align*}
	\max_{i\in \{0,...,n_L\}, j \in\{1,...,n\}} \left|\hat{A}_{ij}^{\hat{z}}-A_{ij} \right| =o_p(1), \quad \mbox{as } n_L \rightarrow \infty, n \rightarrow \infty, T \rightarrow \infty. 
	\end{align*}
\end{thm}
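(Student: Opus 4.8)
The plan is to follow the template of the proof of Theorem~\ref{thm:estimation}, decomposing the estimation error into an \emph{oracle} error (the error of the estimator built from the true labels $z_*$) and a \emph{misclassification} error (the discrepancy caused by using $\hat z$ in place of $z_*$). Concretely, for every $i\in\{0,\dots,n_L\}$ and $j\in\{1,\dots,n\}$ I would write
\begin{align*}
\left|\hat A_{ij}^{\hat z}-A_{ij}\right|\le \left|\hat A_{ij}^{\hat z}-\hat A_{ij}^{z_*}\right|+\left|\hat A_{ij}^{z_*}-A_{ij}\right|,
\end{align*}
and bound the two terms separately, uniformly over $(i,j)$. The index $i=0$ (the null row, with $A_{0j}=\pi_j$) must be carried throughout; since the lemma bounding $t_{i*}$ for the null model gives $t_{0*}\ge Tc_{\mathrm{min}}/n_L$ with probability approaching one, the null row can be treated on exactly the same footing as a hub row.

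For the oracle term, conditional on $z_*$ the summands $\{G_j^{(t)}:z_*^{(t)}=i\}$ are i.i.d.\ $\mathrm{Ber}(A_{ij})$, so $\hat A_{ij}^{z_*}-A_{ij}$ is an average of $t_{i*}$ centered Bernoulli variables. A Hoeffding bound together with a union bound over the $(n_L+1)n$ pairs $(i,j)$ yields, on the event $t_{i*}\gtrsim T/n_L$, a uniform deviation of order $\sqrt{n_L\log(n_L n)/T}$. This is $o_p(1)$ precisely because $(n_L\log n)/T=o(1)$ controls the $\log n$ part while $(n_L^2\log n_L)/T=o(1)$ forces $n_L\log n_L/T=o(1/n_L)$; this is the only place the extra hypothesis $(n_L\log n)/T=o(1)$ is needed.

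For the misclassification term, let $\hat t_i=\sum_t 1(\hat z^{(t)}=i)$ and let $m_i$ be the number of groups on which exactly one of $1(\hat z^{(t)}=i)$, $1(z_*^{(t)}=i)$ equals one. Using $\sum_t G_j^{(t)}1(z_*^{(t)}=i)\le t_{i*}$, that $|\hat t_i-t_{i*}|\le m_i$, and that the two numerators differ by at most $m_i$, a short computation gives
\begin{align*}
\left|\hat A_{ij}^{\hat z}-\hat A_{ij}^{z_*}\right|\le \frac{2m_i}{\hat t_i}.
\end{align*}
Every group counted by $m_i$ is misclassified, so $m_i\le T_e$, and on the label-consistency event $\hat t_i\ge t_{i*}-m_i\gtrsim T/n_L$. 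Hence this term is bounded, uniformly in $(i,j)$, by a constant times $n_L T_e/T$, and it suffices to establish the \emph{faster rate} $n_L T_e/T=o_p(1)$.

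The main obstacle is exactly this faster rate, and this is where the strong hypotheses of Theorem~\ref{thm:ti0} enter. I would obtain $n_L T_e/T=o_p(1)$ by re-running the M-estimation argument behind Theorem~\ref{thm:label_consistency_null}: the separation inequality derived from Theorem~\ref{thm:ti0} bounds $T_e/T$ by a constant times $(n_L/(d^2 v nT))\max_z|L_G(z)-L_P(z)|$, and Theorem~\ref{thm:uniform0} controls that maximum; multiplying through by the extra factor $n_L$ and optimizing the deviation level shows $n_L T_e/T\to 0$ as soon as $(n_L^3\log T)/(d^2 v T)=o(1)$ and $(n_L^4\log n_L)/(d^4 v^2 n)=o(1)$, both implied by the stated $(n_L^5\log T)/(d^2 v T)=o(1)$ and $(n_L^8\log n_L)/(d^4 v^2 n)=o(1)$. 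This parallels how Proposition~\ref{lemma:faster} sharpens Theorem~\ref{thm:label_consistency} in the asymmetric setting. The genuinely new ingredient, already isolated in Theorem~\ref{thm:ti0}, is that the denominator lower bound $\hat t_i\gtrsim T/n_L$ can no longer be read off from $A_{ii}\equiv 1$ --- since any group may legitimately be assigned to the null class --- but instead follows from the bound $t_{i0}/t_{i*}\le\eta$, which prevents hub-$i$ groups from being absorbed wholesale into the hubless component. Assembling the oracle and misclassification bounds then gives the claimed uniform consistency.
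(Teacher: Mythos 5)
Your overall skeleton matches the paper's: reduce the theorem to a faster label-consistency rate (the paper shows it suffices to prove $(n_L+1)T_e/T=o_p(1)$), bound the oracle term $|\hat A_{ij}^{z_*}-A_{ij}|$ by Hoeffding plus a union bound using $(n_L\log n)/T=o(1)$, and bound the substitution term $|\hat A_{ij}^{\hat z}-\hat A_{ij}^{z_*}|$ by a constant times $n_L T_e/T$. The gap is in how you obtain the faster rate. You assert that the separation inequality bounds the \emph{full} $T_e/T$ by a constant times $(n_L/(d^2 v nT))\max_z|L_G(z)-L_P(z)|$, and conclude that $n_L T_e/T=o_p(1)$ holds already under $(n_L^3\log T)/(d^2 v T)=o(1)$ and $(n_L^4\log n_L)/(d^4 v^2 n)=o(1)$. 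No such linear inequality is available for the full $T_e$ in the null-component model. Decompose $T_e=\sum_{i=1}^{n_L}\sum_{k\ne i}t_{ik}+\sum_{i=1}^{n_L}t_{0i}$. The linear (well-separation) bound, inequality \eqref{sep11}, covers only the first piece --- groups whose \emph{true} label is a hub --- because condition (\romannumeral 1) supplies separation sets $V_i$ only for $i=1,\dots,n_L$; there is no $V_0$ for the null class, so for a hubless group assigned to hub $k$ one cannot lower-bound $\sum_j(A_{0j}-\bar A_{kj})^2$ by anything of order $d^2 vn/n_L$ (the rows $A_{k'j}$, $k'\ne k$, may lie on either side of $A_{0j}$, and the contamination fraction $\delta_3$ is a constant, not small relative to $d$, which may tend to $0$).

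For the second piece the paper argues entirely differently (display \eqref{sep22}): the $t_{0i}$ contaminating null groups pull $\bar A_{ij}$ away from $A_{ij}$ by $\gtrsim d\,n_L t_{0i}/T$, so the \emph{correctly} classified hub-$i$ groups contribute $L_P(z_*)-L_P(\hat z)\gtrsim \frac{d^2 v nT}{n_L^2}\bigl(\frac{n_L t_{0i}}{T}\bigr)^2$ --- a bound that is \emph{quadratic} in $t_{0i}/T$, not linear. To force $\max_i n_L(n_L+1)t_{0i}/T=o_p(1)$ (which is what $(n_L+1)T_e/T=o_p(1)$ requires) out of a quadratic bound, the deviation level $\eta$ in Theorem \ref{thm:uniform0} must be taken of order $d^2 v nT\epsilon^2/n_L^4$, and that is exactly what produces the hypotheses $(n_L^5\log T)/(d^2 v T)=o(1)$ and $(n_L^8\log n_L)/(d^4 v^2 n)=o(1)$ in the statement. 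Your remark that the weaker $n_L^3$/$n_L^4$ conditions suffice (so that the stated stronger ones are mere slack) is the tell-tale sign: under your route the theorem would hold with strictly weaker hypotheses, but the route breaks precisely at the $t_{0i}$ term. You correctly identified that Theorem \ref{thm:ti0} is needed to control $t_{i0}$ (hub groups absorbed into the null class), but the dual problem --- null groups absorbed into hub classes --- is the one that admits no linear bound, and your proposal has no mechanism for it; to repair the proof you must handle $\sum_i t_{0i}$ via the quadratic inequality \eqref{sep22}, as the paper does.
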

\begin{manualtheorem}{\ref{thm:estimation_null}$'$}
	Under the conditions of Theorem \ref{thm:label_null_fixed}, if $ (\log T)/(d^2 v T)=o(1) $, $d^4 v^2 n\rightarrow \infty $ and $\log n/T=o(1)$,
	\begin{align*}
	\max_{i\in \{0,...,n_L\}, j \in\{1,...,n\}} \left|\hat{A}_{ij}^{\hat{z}}-A_{ij} \right| =o_p(1), \quad \mbox{as } n \rightarrow \infty, T \rightarrow \infty. 
	\end{align*}
\end{manualtheorem}

\section{NUMERICAL STUDIES}\label{sec:numerical}

We examine the performance of the estimators for the  asymmetric hub model and the hub model with a null component, especially the behavior of the estimators with varying $n_L$, $n$ and $T$, by simulation studies. 

We first introduce the simulation setups for the asymmetric hub model. Let the size of the hub set, $n_L$, be 10 or 20. Each node $i$ within the hub set is to be selected as a group hub with the probability $\rho_i= 1/ n_L$. Let the size of the node set, $n$, be 100 or 1000. 

We partition the follower set $\{n_L+1,...,n\}$ into $n_L$ non-overlapping sets $V_1,...,V_{n_L}$, i.e., $\{n_L+1,...,n\}=V_1\cup \cdots \cup V_{n_L}$ and $V_i \cap V_{i'}=\emptyset$ for $i\neq i'$. Each set $V_i$ is the set of followers with a preference for hub $i$ over other hub nodes. As in Theorem \ref{thm:separate} and \ref{thm:separate_fixed}, we assume different ranges of probabilities of joining a group for   followers that prefer a specific leader than for followers which do not prefer that leader. That is, for $j \in V_i$, the parameters $A_{ij}$ are generated independently with $U(0.2,0.4)$, and for $j \notin V_i$, the parameters $A_{ij}$ are generated independently with $U(0,0.2)$. 

For clarification, we will not use  prior information about how $A$ was generated  in the estimating procedure. That is, we still treat $A$ as unknown fixed parameters in the estimation. We generate these probabilities from uniform distributions for the sole purpose of adding more variations to the parameter setup. 

In each setup, we consider three different sample sizes, $T=1000, 1500$ and 2000.

For the hub model with a null component, let the probability of hubless groups $\rho_0=0.2$, and let $\rho_i=0.8/n_L$ for $i=1,...,n_L$. For a  hubless group, each node will independently join the group with probability $\pi_j \equiv 0.05$ for $j=1,...,n$. The setups on $n_L$, $n$, $\{V_1,...,V_{n_L} \}$, $A$ and $T$ are identical to the asymmetric hub model case. 

Exact maximization in $z$ of $L_G(z)$ for both the asymmetric hub model and the hub model with a null component is computational intractable. However, a classical algorithm -- the hard expectation-maximization (EM) algorithm --  can be used in our maximization problem. This is different from the case of stochastic block models because given $A$, the assignment of $z^{(t)}$ for each group does not interact with the labels of other groups. 

Below we give the hard EM algorithm for the asymmetric hub model. The algorithm for the hub model with a null component is almost identical with minor modifications. 

\textbf{Algorithm 1:} (Hard EM)
We iteratively update $\hat{A}$ and $\hat{z}$ by the following the M-steps and E-steps until convergence. 
\begin{itemize}
	\item[] \textbf{M-step:} Given $\hat{z}$, update $\hat{A}$  by
	\begin{align*}
	\hat{A}_{ij}=\frac{\sum_t G_j^{(t)} 1(\hat{z}^{(t)}=i) }{\sum_t  1(\hat{z}^{(t)}=i)}, \,\, \textnormal{for } i=1,...,n_L, j=1,...,n.
	\end{align*}
	
	\item[] \textbf{E-step:} Given $\hat{A}$, update $\hat{z}$ by 
	\begin{align*}
	\hat{z}^{(t)}=\argmax_{\{1,...,n \}} \sum_j G_j^{(t)} \log \hat{A}_{ij}+  (1-G_j^{(t)}) \log (1-\hat{A}_{ij}).
	\end{align*}
\end{itemize}

The proposed algorithm is in line with the common usage of hard EM in the literature, such as the $k$-means algorithm \citep{macqueen1967}, which is a variant of the (soft) EM algorithm for Gaussian mixture models \citep{Fraley2002}. To help guard against local maxima, we run each algorithm  for 20 random initial values.

\begin{table}[!ht]
	\caption{Asymmetric hub model results. Mis-labels: the fraction of groups with incorrect hub labels.  RMSE($\hat{A}_{ij}$): average RMSEs when the hub labels are unknown.  RMSE*: average RMSEs when the hub labels are known.   Standard deviations $\times 10^4$ in parentheses.}
	\label{Table:simulation1}
	\begin{center}
		\begin{tabular}{|c | c | c |c|c|c|c|}
			\hline
			
			$n_L=10$	& \multicolumn{3}{c|}{$n=100$} & \multicolumn{3}{c|}{$n=1000$}\\
			
			\hline
			& Mis-labels & RMSE($\hat{A}_{ij}$) & RMSE*    & Mis-labels & RMSE($\hat{A}_{ij}$)  & RMSE*    \\
			$T=1000$ &	0.0379	(74) & 0.0327	(9) & 0.0314 	(8)	&	0.0000	(0) &	0.0315	(3) &	0.0315	(3) \\
			$T=1500$ &	0.0311	(56) & 0.0266	(7) & 0.0256 	(7) &	0.0000	(0) &	0.0257	(2) &	0.0257	(2) \\
			$T=2000$ &	0.0283	(50) & 0.0229	(6) & 0.0222 	(6)	&	0.0000	(0) &	0.0223	(2) &	0.0223	(2)  \\
			
			\hline

			$n_L=20$	& \multicolumn{3}{c|}{$n=100$} & \multicolumn{3}{c|}{$n=1000$}\\
			
			\hline
			& Mis-labels & RMSE($\hat{A}_{ij}$) & RMSE*    & Mis-labels & RMSE($\hat{A}_{ij}$)  & RMSE*    \\
			$T=1000$  &	0.1885	(171) &	0.0513	(13) & 0.0432	(9)	&	0.0016	(14) &	0.0435	(3)  &	0.0434	(3)\\
			$T=1500$ &	0.1396	(129) &	0.0408	(10) & 0.0351	(7) &	0.0003	(5)&	0.0354	(2)  &	0.0353	(2) \\
			$T=2000$ &	0.1159	(103)  &	0.0347	(8)  & 0.0304	(6)	&	0.0001	(2) &	0.0306	(2) &	0.0306	(2)\\

			\hline
			
		\end{tabular}
	\end{center}
	
\end{table}

\begin{table}[!ht]
	\caption{Hub model with null component results. Mis-labels: the fraction of groups with incorrect hub labels.  RMSE($\hat{A}_{ij}$): average RMSEs when the hub labels are unknown.  RMSE*: average RMSEs when the hub labels are known.   Standard deviations $\times 10^4$ in parentheses.}
	\label{Table:simulation2}
	\begin{center}
		\begin{tabular}{|c | c | c |c|c|c|c|}
			\hline
			
			$n_L=10$	& \multicolumn{3}{c|}{$n=100$} & \multicolumn{3}{c|}{$n=1000$}\\
			
			\hline
			& Mis-labels & RMSE($\hat{A}_{ij}$) & RMSE*    & Mis-labels & RMSE($\hat{A}_{ij}$)  & RMSE*    \\
			$T=1000$  &	0.0739	(104) &	0.0364	(12) & 0.0353	(11)   &	0.0017	(28) & 0.0353 (5) & 0.0353	(5)	          \\
			$T=1500$ &	0.0638	(79) &	0.0296	(9) & 0.0287	(8)    & 0.0001	(6)  	& 0.0288  (3) & 0.0288	(3)          \\
			$T=2000$ &	0.0588	(66) &	0.0256	(8) & 0.0248	(7)   &	0.0000	(1) &	0.0249	(3) & 0.0249	(3)   
			\\
			
			\hline

			$n_L=20$	& \multicolumn{3}{c|}{$n=100$} & \multicolumn{3}{c|}{$n=1000$}\\
			
			\hline
			& Mis-labels & RMSE($\hat{A}_{ij}$) & RMSE*    & Mis-labels & RMSE($\hat{A}_{ij}$)  & RMSE*    \\
			$T=1000$ &	0.2620	(186) 	& 0.0561 	(15) & 0.0484	(10) &	0.0127	(69)	 &	0.0484	(6) & 0.0487	(5) \\
			$T=1500$ &	0.2138	(142) 	& 0.0451	(11) & 0.0393	(8)	 &	0.0014	(16) &	0.0396	(4) & 	0.0396	(4)  \\
			$T=2000$ &	0.1865	(120) 	& 0.0385	(9) & 0.034	(7)		 &	0.0003	(4)  &	0.0342	(3) & 0.0342	(3) \\

			\hline
			
		\end{tabular}
		
	\end{center}
	
\end{table}

Table \ref{Table:simulation1} and \ref{Table:simulation2} show the performance of the estimators for the asymmetric hub model and the hub model with the null component, respectively. The first measure of performance we are interested in is the proportion of mislabeled groups, $T_e/T$.  As the proportion of mislabeled groups approaches zero, we expect the parameter estimates to approach the accuracy achievable if the hub nodes are known.  The second measure of performance is the RMSE($\hat{A}_{ij}$).  As a reference point, we also provide the RMSE achieved when we treat the hub nodes as known, RMSE*.  All results are based on 500 replicates.

From the tables, the estimators for the asymmetric hub model generally outperform those for the hub model with the null component as the latter is a more complex model. The patterns within the two tables are; however, similar. First, the performance becomes better as the sample size $T$ grows, which is in line with common sense in statistics. Second, the performance becomes worse as $n_L$ grows because $n_L$ is the number of components in the mixture model, and thus a larger $n_L$ indicates a more complex model. Third, the effect of $n$ is slightly more complicated: the RMSE* for the case that hub labels are known slightly increases as $n$ grows because the model contains more parameters. What we are interested in is the case where hub labels are unknown, and this is what our theoretical studies focused on. In this case, the RMSE($\hat{A}_{ij}$) significantly improves as $n$ grows. This is because the clustered pattern becomes clearer as the number of followers increases, which is in line with the label consistency results in Section \ref{sec:classical_consistency} and \ref{sec:null_consistency}.

\section{SUMMARY AND DISCUSSION} \label{sec:summary}
In this paper we studied the theoretical properties of the hub model and its variant from the perspective of Bernoulli mixture models. 
The contribution of the paper is three-fold. First, we proved the model identifiability  of the hub model. Bernoulli mixture models are a notoriously difficult model to prove identifiability on, especially under mild conditions. Second, we proved the label consistency and estimation consistency of the hub model. Third, we generalized the hub model by adding a null component that allows nodes to independently appear in hubless groups. The new model can naturally degenerate to the null model -- the Bernoulli product. We also proved identifiability and consistency of the newly proposed model. 

A natural constraint from \cite{Zhao2015} which we  apply to the hub model is $A_{ii}=1 \,\, (i=1,...,n_L)$, which turns out to be a key condition for ensuring model identifiability and  avoiding the label swapping issue in the proof of consistency. On the other hand, this constraint prevents the asymmetric hub model from naturally degenerating to the null model because one node always appear in every group when there is only one component in the hub model, which motivated adding the null component to the model. 

For future works, we plan to study the problem of model selection  -- in the context of the hub model, the problem of how to identify the hub set from the grouped data. This is seemingly a more difficult problem than estimating the number of communities in the context of community detection because not only do we need to estimate the size of the hub set, $n_L$, but we must also estimate which nodes belong to the hub set. Another direction we would like to explore is to go beyond the independence assumption and to develop theory and model selection methodologies for correlated or temporal-dependent groups \citep{zhao2018temporal}.

\section*{Appendix}
We start by recalling notations defined in the main text. Recall that $z_{*}$ is the true label assignment, $z$ is an arbitrary label assignment, and $\hat{z}$ is the maximum profile likelihood estimator. Furthermore, $t_{i*}=\sum_t 1(z_*^{(t)}=i)$, and $ t_{i}=\sum_t 1(z^{(t)}=i),t_{ii'}=\sum_t  1(z_*^{(t)}=i, \hat{z}^{(t)}=i' )$.

\begin{proof}[Proof of Lemma \ref{thm:rho}]
	By Hoeffding's inequality \cite{Hoeffding63}, for  sufficiently small $\epsilon$,
	\begin{align}
	\mathbb{P} \left ( \left |\frac{t_{i*}}{T}-\rho_i \right | \geq \frac{\epsilon}{n_L} \right ) \leq 2 \exp \left \{ - \frac{2T \epsilon^2}{n_L^2} \right \}.
	\end{align}
	So
	\begin{align*}
	\mathbb{P} \left ( \left |\frac{t_{i*}}{T}-\rho_i \right |\leq \frac{\epsilon}{n_L}, \,\, i=1,...,n_L  \right ) \geq 1- 2 n_L  \exp \left \{ - \frac{2T \epsilon^2}{n_L^2} \right \} \rightarrow 1.
	\end{align*}
	The conclusion holds by letting $c_{\textnormal{max}}=c_{\textnormal{max}*}+\epsilon$ and $c_{\textnormal{min}}=c_{\textnormal{min}*}-\epsilon$.
\end{proof}

\begin{proof}[Proof of Lemma \ref{thm:partition}]
	\begin{align*}
	L_G(z)-L_P(z)  = &  \left ( \sum_{i=1}^{n_L} t_i \sum_{j } \hat{A}_{ij} \log \hat{A}_{ij}+(1-\hat{A}_{ij}) \log (1-\hat{A}_{ij}) \right ) \\
	& -  \left ( \sum_{i=1}^{n_L} t_i \sum_{j } \hat{A}_{ij} \log \bar{A}_{ij}+(1-\hat{A}_{ij}) \log (1-\bar{A}_{ij})  \right ) \\
	& + \left ( \sum_{i=1}^{n_L} t_i \sum_{j } \hat{A}_{ij} \log \bar{A}_{ij}+(1-\hat{A}_{ij}) \log (1-\bar{A}_{ij})  \right ) \\
	& - \left ( \sum_{i=1}^{n_L} t_i \sum_{j } \bar{A}_{ij} \log \bar{A}_{ij}+(1-\bar{A}_{ij}) \log (1-\bar{A}_{ij})  \right ) \\
	= & \sum_{i=1}^{n_L} t_i \sum_{j } D(\hat{A}_{ij}|\bar{A}_{ij}) +B_{n_L,n,T}.
	\end{align*}
\end{proof}

\begin{proof}[Proof of Theorem \ref{thm:uniform}]
	Due to Lemma \ref{thm:partition}, we bound the two parts  separately. First consider $\sum_{i=1}^{n_L} t_i \sum_{j} D(\hat{A}_{ij}|\bar{A}_{ij}) $. 
	We adopt an inequality proved in \cite{Choietal2011}, which is based on a heterogeneous Chernoff bound in \cite{dubhashi2009concentration}. 
	Let $\nu$ be any realization of $\hat{A}$. 	
	\begin{align*}
	\mathbb{P}(\hat{A}_{ij}=\nu_{ij}|z_{*})\leq e^{-t_i D(\nu_{ij} | \bar{A}_{ij})}.
	\end{align*}
	By the independence of $\hat{A}_{ij}$ conditional on $z_{*}$,
	\begin{align*}
	\mathbb{P}(\hat{A}=\nu|z_*) \leq \exp \left \{ -\sum_{i=1}^{n_L} \sum_{j} t_i D( \nu_{ij} |\bar{A}_{ij}) \right \}.
	\end{align*}
	Let $\hat{\mathcal{A}}$ be the range of $\hat{A}$ for a fixed $z$. Then $|\hat{\mathcal{A}} |\leq \prod_{i=1}^{n_L} (t_i+1)^n \leq  \prod_{i=1}^{n_L} (t_i+1)^n  \leq (T/n_L+1)^{n_L n}$, as $\hat{A}_{ij}$ can only take values from $0/t_i,1/t_i,...,t_i/t_i$.
	
	For all $\eta>0$,
	\begin{align*}
	& \mathbb{P} \left ( \left . \sum_{i=1}^{n_L} \sum_{j } t_i D(\hat{A}_{ij} |\bar{A}_{ij}) \geq \eta  \right | z_* \right ) \\
	=& \sum_{\nu \in \hat{\mathcal{A}}} \mathbb{P} \left ( \left . \hat{A}=\nu, \sum_{i=1}^{n_L} \sum_{j } t_i D(\nu_{ij} |\bar{A}_{ij}) \geq \eta \right | z_* \right ) \\
	\leq & \sum_{\nu \in \hat{\mathcal{A}}} \exp \left \{ -\sum_{i=1}^{n_L} \sum_{j } t_i D( 
	\nu_{ij} |\bar{A}_{ij}) \right \} 1 \left \{  -\sum_{i=1}^{n_L} \sum_{j } t_i D(\nu_{ij} |\bar{A}_{ij}) \leq - \eta \right \} \\
	\leq  & \sum_{\nu \in \hat{\mathcal{A}}} e^{-\eta} \leq  | \hat{\mathcal{A}} |  e^{-\eta} \leq  (T/n_L+1)^{n_L n} e^{-\eta}, 
	\end{align*}
	and
	\begin{align*}
	\mathbb{P} \left (  \max_{z} \sum_{i=1}^{n_L} \sum_{j } t_i D( \hat{A} |\bar{A}_{ij}) \geq \eta \right ) \leq n_L^T  (T/n_L+1)^{n_L n} e^{-\eta}.
	\end{align*}
	
	Now we bound the second term. Note that the second term is 
	\begin{align*}
	B_{n_L,n,T}= & \sum_{i=1}^{n_L} t_i \left ( \sum_{j } (\hat{A}_{ij} \log \bar{A}_{ij}+(1-\hat{A}_{ij}) \log (1-\bar{A}_{ij})) \right.  \\ 
	& -\left. \sum_{j }( \bar{A}_{ij} \log \bar{A}_{ij}+(1-\bar{A}_{ij}) \log (1-\bar{A}_{ij})  ) \right ).
	\end{align*}
	This term can be bounded by Theorem 2 in \cite{zhao_bernstein} with a slight generalization. Let $Y_j=\sum_{i=1}^{n_L} t_i   (\hat{A}_{ij} \log \bar{A}_{ij}-\bar{A}_{ij} \log \bar{A}_{ij})   $. From the proof of Theorem 2 in \cite{zhao_bernstein}, for any $|\lambda|<1$, the moment generating function of $Y_j$, $	\mathbb{E} [e^{\lambda Y_j }|z_*]\leq \exp \left \{ \frac{T \lambda^2}{2(1-|\lambda|)}  \right  \}$. Therefore, $	\mathbb{E} [e^{\lambda \sum_{j}Y_j }|z_*]\leq \exp \left \{ \frac{nT \lambda^2}{2(1-|\lambda|)}  \right  \} $. By the same argument in \cite{zhao_bernstein} using the Chernoff bound,
	\begin{align*}
	\mathbb{P} \left ( \left .  |B_{n_L,n,T}| \geq \eta \right |z_* \right ) \leq 4 \exp \left \{ -\frac{\eta^2}{4(2nT+\eta)} \right \}.
	\end{align*}
	It follows that
	\begin{align*}
	\mathbb{P} \left ( \max_{z} |B_{n_L,n,T}| \geq \eta \right ) \leq 4 n_L^T \exp \left \{ -\frac{\eta^2}{4(2nT+\eta)} \right \}.
	\end{align*}
\end{proof}

\begin{proof}[Proof of Theorem \ref{thm:separate}]
	Recall that $t_{ik}=\sum_t  1(z_*^{(t)}=i, \hat{z}^{(t)}=k ), t_{i}=\sum_t 1(\hat{z}^{(t)}=i), i=1,...,n_0, k=1,...,n_0$. Here we slight abuse the notation $t_i$ as it is defined specifically for the MLE $\hat{z}$ not for an arbitrary $z$. 
	
	We first prove a fact: if $(n_0^2 \log n_0)/T=o(1)$, for $0<\delta_1<e^{-c_0}$, 
	\begin{align*}
	\mathbb{P} \left (  \bigcup_{i=1}^{n_0} \left  \{\frac{t_{ii}}{t_{i*}}\leq \delta_1 \right \} \right ) \rightarrow 0.
	\end{align*}
	To prove it, note that since $\hat{z}$ must be feasible (the estimated hub must appear in the group as we assume $A_{ii}\equiv 1$), we have 
	\begin{align}
	& \mathbb{P} \left ( \left . \frac{t_{ii}}{t_{i*}}\leq \delta_1 \right | z_* \right ) \nonumber \\
	\leq & \mathbb{P} \left ( \left . \frac{1}{t_{i*}}  \sum_{t=1}^T 1(z_*^{(t)}=i)  \prod_{k \in \{1,...,n_0 \}, k\neq i} (1-G_k^{(t)}) \leq \delta_1 \right | z_* \right ) \label{bern2}.
	\end{align}
	Now since 
	\begin{align*}
	\mathbb{E} \left [ \left .  \prod_{k \in \{1,...,n_0 \}, k\neq i}  (1-G_k^{(t)}) \right | z_*^{(t)}=i \right ] = \prod_{k \in \{1,...,n_0 \}, k\neq i} (1-A_{ik})   \geq (1-c_0/n_0)^{n_0} \geq e^{-c_0},
	\end{align*}
	by Hoeffding's inequality,
	\begin{align*}
	\eqref{bern2}  \leq & \mathbb{P} \left ( \left . \frac{1}{t_{i*}}  \sum_{t=1}^T 1(z_*^{(t)}=i) \left [ \prod_{k \in \{1,...,n_0 \}, k\neq i} (1-G_k^{(t)}) -\prod_{k \in \{1,...,n_0 \}, k\neq i} (1-A_{ik})\right ] \leq \delta_1-e^{-c_0} \right | z_* \right ) \\
	\leq & \exp \{ -2t_{i*} (e^{-c_0}-\delta_1 )^2 \}.
	\end{align*}
	Hence
	\begin{align*}
	& \mathbb{P} \left ( \left .  \bigcup_{i=1}^{n_0} \left \{\frac{t_{ii}}{t_{i*}}\leq \delta_1 \right \} \right | z_* \right ) \\
	= & \mathbb{P} \left ( \left .   \bigcup_{i=1}^{n_0} \left \{\frac{t_{ii}}{t_{i*}}\leq \delta_1 \right \}, \{ t_{i*} \geq c_\textnormal{min} T/n_0, \textnormal{for all $i$} \} \right | z_* \right ) \\
	& + \mathbb{P} \left ( \left .   \bigcup_{i=1}^{n_0} \left \{\frac{t_{ii}}{t_{i*}}\leq \delta_1 \right \}, \{ t_{i*} < c_\textnormal{min} T/n_0, \textnormal{for some $i$} \} \right | z_* \right ) \\
	\leq  &  \sum_{i=1}^{n_0} \mathbb{P} \left ( \left . \frac{t_{ii}}{t_{i*}}\leq \delta_1 \right | z_* \right )1(t_{i*}\geq c_\textnormal{min} T/n_0) \\
	& +1(t_{i*} < c_\textnormal{min} T/n_0, \textnormal{for some $i$}) \\
	\leq &  n_0 \exp \{ -2c_\textnormal{min} T/(n_0) (e^{-c_0}-\delta_1 )^2 \} +1(t_{i*} < c_\textnormal{min} T/n_0, \textnormal{for some $i$}).
	\end{align*}
	It follows that
	\begin{align*}
	& \mathbb{P} \left (   \bigcup_{i=1}^{n_0} \left \{\frac{t_{ii}}{t_{i*}}\leq \delta_1 \right \} \right ) \\
	= & \mathbb{E}_{z_*} \left [\mathbb{P} \left ( \left .  \bigcup_{i=1}^{n_0} \left \{\frac{t_{ii}}{t_{i*}}\leq \delta_1 \right \} \right | z_* \right )\right ] \\
	= & n_0 \exp \{ -2c_\textnormal{min} T/(n_0) (e^{-c_0}-\delta_1 )^2 \} +\mathbb{P}(t_{i*} < c_\textnormal{min} T/n_0, \textnormal{for some $i$}) \rightarrow 0.
	\end{align*}
	Therefore, $\frac{t_{ii}}{t_{i*}}\geq \delta_1 $ for $i=1,...,n_0$ with probability approaching 1.
	
	Let $\mathcal{E}=\{ \frac{t_{ii}}{t_{i*}}\geq \delta_1 \textnormal{ and } t_{i*}\geq c_\textnormal{min} T/n_0, i=1,...,n_0  \}$. We have shown $\mathbb{P}(\mathcal{E})\rightarrow 1$. 
	The inequalities below are proved within the set $\mathcal{E}$, and thus hold with probability approaching 1. 
	
	For $i=1,...,n_0$, $k=1,...,n_0$, $k\neq i$,
	\begin{align*}
	\frac{t_{ik}}{t_k} =\frac{t_{ik}}{\sum_{k'=1}^{n_0} t_{k'k}} \leq \frac{t_{ik}}{t_{ik}+t_{kk}} =\frac{t_{ik}/t_{i*}}{t_{ik}/t_{i*} +t_{kk}/t_{k*} \cdot t_{k*}/t_{i*}} \leq \frac{1}{1+\delta_1\cdot c_{\textnormal{max}}/c_{\textnormal{min}}} =\delta_2.
	\end{align*}
	Now we give a lower bound for $A_{ij}-\bar{A}_{kj}$ for $j \in V_i$ and $k \neq i$,
	\begin{align*}
	A_{ij}-\bar{A}_{kj} & = \frac{\sum_t (A_{ij}-P_j^{(t)}) 1(\hat{z}^{(t)}=k)}{t_k} \\
	& = \frac{\sum_{k'=1}^{n_0} (A_{ij}-A_{k'j}) t_{k'k}}{t_k} \\
	& \geq \frac{d \sum_{k' \neq i} t_{k'k}}{t_k}  \geq	d (1-\delta_2).
	\end{align*}
	Finally,
	\begin{align*}
	L_P(z_*)-L_P(\hat{z}) & \geq \sum_t \sum_j 2(P_j^{(t)}-\bar{A}_{\hat{z}^{(t)},j})^2 \\
	& \geq \sum_{i=1}^{n_0} \sum_{k\neq i} \sum_{t: z_*^{(t)}=i, \hat{z}^{(t)}=k}  \sum_{j \in V_i} 2(A_{ij}-\bar{A}_{kj})^2 \\
	& \geq \sum_{i=1}^{n_0} \sum_{k\neq i} \sum_{t: z_*^{(t)}=i, \hat{z}^{(t)}=k} \sum_{j \in V_i} 2d^2 (1-\delta_2)^2 \\
	& \geq 2d^2 (1-\delta_2)^2 vn/n_0 T_e,
	\end{align*}
	where the first inequality follows from a basic inequality for Kullback–Leibler divergence (see \cite{popescu2016bounds}).
\end{proof}

\begin{proof}[Proof of Theorem \ref{thm:label_consistency}]
	First we prove 
	\begin{align*}
	\max_z \frac{  n_L }{d^2 v nT}|L_G(z)-L_P(z)| =o_p(1).
	\end{align*}
	From Theorem \ref{thm:uniform},
	\begin{align*}
	\mathbb{P}(\max_z |L_G(z)-L_P(z)|\geq 2 \eta ) \leq n_L^T (T/n_L+1)^{n_L n} e^{-\eta}+4 n_L^T \exp \left \{ -\frac{\eta^2}{4(2nT+\eta)} \right \}.
	\end{align*}
	The above bound goes 0 if $\eta=(d^2 v nT \epsilon) /n_L$,  $ (n_L^2\log T)/(d^2 v T)=o(1) $ and $ (n_L^2 \log n_L )/ (d^4 v^2 n)=o(1) $. 
	
	Now for all $\epsilon>0$, 
	\begin{align*}
	& \mathbb{P} \left ( \frac{T_e}{T} \geq \epsilon  \right ) \\
	= & \mathbb{P} \left ( \frac{T_e}{T} \geq \epsilon, \frac{\delta  n_L }{d^2 v nT}(L_P(z_*)-L_P(\hat{z})) \geq \frac{T_e}{T} \right ) \\
	&  +  \mathbb{P} \left ( \frac{T_e}{T} \geq \epsilon,  \frac{\delta  n_L }{d^2 v nT}(L_P(z_*)-L_P(\hat{z})) < \frac{T_e}{T}  \right ) \\
	= & \mathbb{P} \left (  \frac{\delta  n_L }{d^2 v nT}(L_P(z_*)-L_P(\hat{z}))  \geq \epsilon \right )+ o(1) \quad \textnormal{(by Theorem \ref{thm:separate})} \\
	= & \mathbb{P} \left (  \frac{\delta  n_L }{d^2 v nT} \left ( (L_P(z_*)-L_G(z_*))+ (L_G(z_*)-L_G(\hat{z}))+(L_G(\hat{z})-L_P(\hat{z})) \right ) \geq \epsilon \right )+ o(1) \\
	\leq & \mathbb{P} \left (  \frac{\delta  n_L }{d^2 v nT} \left ( |L_P(z_*)-L_G(z_*)|+|L_G(\hat{z})-L_P(\hat{z})| \right ) \geq \epsilon \right )+ o(1) \\
	\rightarrow & 0 \quad \textnormal{(by Theorem \ref{thm:uniform})}.
	\end{align*}
\end{proof}

\begin{proof}[Proof of Lemma \ref{lemma:faster}]
	It is sufficient to show 	
	\begin{align*}
	\max_z \frac{  n_L^2 }{d^2 v nT}|L_G(z)-L_P(z)| =o_p(1),
	\end{align*}
	which holds under the conditions of the lemma by the same argument in the proof of Theorem \ref{thm:label_consistency}. 
\end{proof}

\begin{proof}[Proof of Theorem \ref{thm:estimation}]
	First we bound $\left |\hat{A}_{ij}^{\hat{z}}-\hat{A}_{ij}^{z_*} \right|$, 
	\begin{align*}
	&|\hat{A}_{ij}^{\hat{z}}-\hat{A}_{ij}^{z_*}| \\
	= & \left |\frac{\sum_t G_j^{(t)} 1 (\hat{z}^{(t)}=i) }{  t_i}-\frac{\sum_t G_j^{(t)} 1 (z_*^{(t)}=i) }{  t_{i*}}  \right | \\
	\leq & \left |\frac{\sum_t G_j^{(t)} 1 (\hat{z}^{(t)}=i) }{  t_i}-\frac{\sum_t G_j^{(t)} 1 (\hat{z}^{(t)}=i) }{  t_{i*}}  \right | \\
	& +\left |\frac{\sum_t G_j^{(t)} 1 (\hat{z}^{(t)}=i) }{  t_{i*}}-\frac{\sum_t G_j^{(t)} 1 (z_*^{(t)}=i) }{  t_{i*}}  \right | \\
	\leq & \left | \frac{t_{i*}-t_{i}}{t_{i*}}  \right |+\frac{\sum_{t} \left | 1 (\hat{z}^{(t)}=i)- 1 (z_*^{(t)}=i) \right |}{t_{i*}} \\
	\leq  & M_0 n_L T_e/T,
	\end{align*}
	where $M_0$ is a constant. 
	
	Now we bound $\max_{ij} \left|\hat{A}_{ij}^{\hat{z}}-A_{ij} \right|$,
	\begin{align*}
	& \mathbb{P} \left (\max_{ij} \left|\hat{A}_{ij}^{\hat{z}}-A_{ij} \right| \geq \epsilon \right ) \\
	\leq & \mathbb{P} \left (\max_{ij} \left|\hat{A}_{ij}^{\hat{z}}-\hat{A}_{ij}^{z_*}  \right | \geq \epsilon/2 \right)+P \left (\max_{ij} \left|\hat{A}_{ij}^{z_*}-A_{ij}  \right | \geq \epsilon/2 \right) \\
	\leq & \mathbb{P} \left (M_0 n_L T_e/T \geq \epsilon \right )+P \left (\max_{ij} \left|\hat{A}_{ij}^{z_*}-A_{ij}  \right | \geq \epsilon/2 \right).
	\end{align*}
	The first term vanishes by Lemma \ref{lemma:faster}. The second term vanishes by Hoeffding's inequality: for all $\epsilon>0$,
	\begin{align*}
	&\mathbb{P} \left ( \left . \left|\hat{A}_{ij}^{z_*}-A_{ij}  \right | \geq \epsilon/2  \right |z_* \right) \\
	= &  \mathbb{P} \left (  \left . \left | \sum_t 1 (z_{*}^{(t)}=i) (G_j^{(t)}-A_{ij}) \right | \geq \epsilon t_{i*}/2 \right | z_* \right) \\
	\leq & 2\exp \{-\epsilon^2 t_{i*}/2 \}.
	\end{align*}
	Therefore, 
	\begin{align*}
	& \mathbb{P} \left (\max_{ij} \left|\hat{A}_{ij}^{z_*}-A_{ij} \right| \geq \epsilon/2 \right ) \\
	\leq & 2 n n_L  \exp \{- \epsilon^2 c_{\textnormal{min}} T/(2n_L) \} + \mathbb{P}(t_{i*} < c_\textnormal{min} T/n_L, \textnormal{for some $i$}) \rightarrow 0,
	\end{align*}
	by a similar argument in the proof of Theorem \ref{thm:separate}.
\end{proof}

\begin{proof}[Proof of Theorem \ref{thm:hub_iden_null}]
	Let $(\tilde{\rho},\tilde{A})\in \mathcal{P}$ be a set of parameters of the hub model with a null component such that $\mathbb{P}(g|\rho,A)= \mathbb{P}(g|\tilde{\rho},\tilde{A})$ for all $g$. 
	Consider the probability that no one appears:
	\begin{align*}
	\tilde{\rho}_0 \prod_{j=1}^n (1-\tilde{\pi}_j) & = \rho_0 \prod_{j=1}^n (1-\pi_j).
	\end{align*}
	For $k=n_L+1,...,n$, consider the probability that only $k$ appears:
	\begin{align*}
	\tilde{\rho}_0 \tilde{\pi}_k \prod_{j=1,...,n,j\neq k} (1-\tilde{\pi}_j) & =\rho_0 \pi_k \prod_{j=1,...,n,j\neq k} (1-\pi_j).
	\end{align*}
	From the above equations, we obtain
	\begin{align}
	& \tilde{\pi}_k  = \pi_k, \quad k=n_L+1,...,n \nonumber, \\
	& \tilde{\rho}_0 \prod_{j=1}^{n_L} (1-\tilde{\pi}_j) = \rho_0 \prod_{j=1}^{n_L} (1-\pi_j) \label{important0}.
	\end{align}
	
	For $i=1,...,n_L$, let $k$ and $k'$ be the nodes from $\{n_L+1,...,n\}$ such that $\pi_k \neq A_{ik}$ and $\pi_{k'} \neq A_{ik'}$.

	Consider the  probability that $i$ appears but no other nodes from $\{1,...,n_L\}$ appears (the rest do not matter)
	\begin{align}
	& \tilde{\rho}_0 \tilde{\pi}_i  \prod_{j=1,...,n_L, j \neq i} (1-\tilde{\pi}_j)  + \tilde{\rho}_i \prod_{j=1,...,n_L, j \neq i} (1-\tilde{A}_{ij}) \label{temp1}  \\
	= &  \rho_0 \pi_i  \prod_{j=1,...,n_L, j \neq i} (1-\pi_j)  + \rho_i \prod_{j=1,...,n_L, j \neq i} (1-A_{ij}); \nonumber
	\end{align}
	the probability that $i$ and $k$ appear but no other nodes from $\{1,...,n_L\}$ appears (the rest do not matter)
	\begin{align}
	& \tilde{\rho}_0 \tilde{\pi}_i  \prod_{j=1,...,n_L, j \neq i} (1-\tilde{\pi}_j) \pi_{k}+ \tilde{\rho}_i \prod_{j=1,...,n_L, j \neq i} (1-\tilde{A}_{ij}) \tilde{A}_{ik} \label{temp2} \\
	= &   \rho_0 \pi_i  \prod_{j=1,...,n_L, j \neq i} (1-\pi_j) \pi_{k}+ \rho_i \prod_{j=1,...,n_L, j \neq i} (1-A_{ij}) A_{ik};  \nonumber
	\end{align}
	the probability that $i$ and $k'$ appear but no other nodes from $\{1,...,n_L\}$ appears (the rest do not matter)
	\begin{align}
	& \tilde{\rho}_0 \tilde{\pi}_i  \prod_{j=1,...,n_L, j \neq i} (1-\tilde{\pi}_j) \pi_{k'}+ \tilde{\rho}_i \prod_{j=1,...,n_L, j \neq i} (1-\tilde{A}_{ij}) \tilde{A}_{ik'} \label{temp3} \\
	= &   \rho_0 \pi_i  \prod_{j=1,...,n_L, j \neq i} (1-\pi_j) \pi_{k'}+ \rho_i \prod_{j=1,...,n_L, j \neq i} (1-A_{ij}) A_{ik'}; \nonumber
	\end{align}
	and the probability that $i,k$ and $k'$ appear but no other nodes from $\{1,...,n_L\}$ appears (the rest do not matter)
	\begin{align}
	& \tilde{\rho}_0 \tilde{\pi}_i  \prod_{j=1,...,n_L, j\neq i} (1-\tilde{\pi}_j)\pi_k \pi_{k'} + \tilde{\rho}_i \prod_{l=1,...,n_L, j \neq i} (1-\tilde{A}_{ij}) \tilde{A}_{ik} \tilde{A}_{ik'} \label{temp4}  \\
	= &  \rho_0 \pi_i  \prod_{j=1,...,n_L, j\neq i} (1-\pi_j)\pi_k \pi_{k'} + \rho_i \prod_{l=1,...,n_L, j \neq i} (1-A_{ij}) A_{ik} A_{ik'}. \nonumber
	\end{align}
	Note that the above equations  are not probabilities of a single realization $g$ but are sums of multiple $\mathbb{P}(g)$. Moreover, we put $\pi_k,\pi_{k'}$ instead of $\tilde{\pi}_k,\tilde{\pi}_{k'}$ on the left-hand side of the equations, since we have proved $\tilde{\pi}_k=\pi_k,k=n_L+1,...,n$. 
	
	Let 
	\begin{align*}
	x & = \rho_0 \pi_i  \prod_{j=1,...,n_L, j \neq i} (1-\pi_j), \\
	\tilde{x} & =\tilde{\rho}_0 \tilde{\pi}_i  \prod_{j=1,...,n_L, j \neq i} (1-\tilde{\pi}_j), \\
	y & = \rho_i \prod_{j=1,...,n_L, j \neq i} (1-A_{ij}), \\
	\tilde{y} & =  \tilde{\rho}_i \prod_{l=1,...,n_L, j \neq i} (1-\tilde{A}_{ij}). 
	\end{align*}
	Then \eqref{temp1}, \eqref{temp2} \eqref{temp3} and \eqref{temp4} become
	\begin{align*}
	\tilde{x}+\tilde{y} & =x+y,  \\
	\tilde{x}\pi_k +\tilde{y} \tilde{A}_{ik} & = x\pi_k +y  A_{ik},   \\
	\tilde{x}\pi_{k'} +\tilde{y} \tilde{A}_{ik'} & = x\pi_{k'} +y  A_{ik'}, \\
	\tilde{x}\pi_k\pi_{k'}  +\tilde{y} \tilde{A}_{ik} \tilde{A}_{ik'} & = x\pi_k\pi_{k'} +y A_{ik} A_{ik'}. 
	\end{align*}
	
	Plugging $\tilde{x}-x=y-\tilde{y}$ into the last three equations, we obtain 
	\begin{align}
	\label{important1} \tilde{y} \tilde{A}_{ik} & =  \tilde{y} \pi_k+ y (A_{ik}-\pi_k), \\
	\label{important2} \tilde{y} \tilde{A}_{ik'} & =  \tilde{y} \pi_{k'}+ y (A_{ik'}-\pi_{k'}), \\
	\label{important3} y\pi_k\pi_{k'}  +\tilde{y} \tilde{A}_{ik} \tilde{A}_{ik'} & = \tilde{y} \pi_k\pi_{k'} +y A_{ik}  A_{ik'}.
	\end{align}
	Multiplying \eqref{important3} by $\tilde{y}$, and plugging the right hand sides of \eqref{important1} and \eqref{important2} into the resulting equation, we obtain
	\begin{align*}
	& y\tilde{y}\pi_k\pi_{k'}  + \tilde{y}^2 \pi_k \pi_{k'}+\tilde{y} \pi_k y (A_{ik'}-\pi_{k'})+ \tilde{y} \pi_{k'} y (A_{ik}-\pi_k)+y^2 (A_{ik}-\pi_k)(A_{ik'}-\pi_{k'}) \\
	& = \tilde{y}^2 \pi_k\pi_{k'} +y\tilde{y} A_{ik}  A_{ik'} \\
	\Rightarrow &  y (A_{ik}-\pi_k)(A_{ik'}-\pi_{k'}) = \tilde{y} (A_{ik}-\pi_k)(A_{ik'}-\pi_{k'}).
	\end{align*}
	Therefore,  
	$
	\tilde{y}=y
	$ since $\pi_k \neq A_{ik}$ and $\pi_{k'} \neq A_{ik'}$. It follows that $\tilde{x}=x$, i.e., 
	\begin{align*}
	\tilde{\rho}_0 \tilde{\pi}_i  \prod_{j=1,...,n_L, j \neq i} (1-\tilde{\pi}_j) = \rho_0 \pi_i  \prod_{j=1,...,n_L, j \neq i} (1-\pi_j), \quad i=1,...,n_L.
	\end{align*}
	
	Combining the above equation with \eqref{important0}, we obtain 
	\begin{align*}
	&\tilde{\pi}_i = \pi_i, \quad i=1,...,n_L, \\
	& \tilde{\rho}_0=\rho_0.
	\end{align*}
	
	Note that $\mathbb{P}(g)=\mathbb{P}(g|z=0)\mathbb{P}(z=0)+\mathbb{P}(g|z\neq 0)\mathbb{P}(z\neq 0)$. So far we have proved parameters of $\mathbb{P}(g|z=0)$ and $\mathbb{P}(z=0)$ are identifiable. We only need to prove the identifiability of $\mathbb{P}(g|z\neq 0)$, which is the case of the asymmetric hub model and has been proved by Theorem \ref{thm:hub_iden}.
\end{proof}

\begin{proof}[Proof of Theorem \ref{thm:ti0}]
	First, we can use the same argument in Theorem \ref{thm:separate} to prove that there exists $\delta_1>0$ such that  
	\begin{align}
	& t_{ii}+t_{i0} \geq \delta_1 t_{i*},   \quad i =1,...,n_L, \label{tricky}\\
	& t_{00} \geq \delta_1 t_{0*},  \label{t00} 
	\end{align}
	with probability approaching 1.
	
	Therefore\footnote{Some inequalities below hold with probability approaching 1. We omit this sentence occasionally.}, for $i=1,...,n_L$, $j\in V_i$,
	\begin{align*}
	A_{ij}-\bar{A}_{0j} & = \frac{\sum_t (A_{ij}-P_j^{(t)}) 1(\hat{z}^{(t)}=0)}{t_0} \\
	& = \frac{\sum_{k=0}^{n_L} (A_{ij}-A_{kj}) t_{k0}}{t_0} \\
	& \geq \frac{(A_{ij}-A_{0j})t_{00}}{t_0}	\geq d \frac{t_{00}}{T} \geq d \frac{t_{00}}{(n_L+1)t_{0*}/c_{\textnormal{min}}} \geq  \frac{dc_{\textnormal{min}} \delta_1}{n_L+1}.
	\end{align*}
	It follows that
	\begin{align}
	L_P(z_*)-L_P(\hat{z})  \geq & \max_{i=1,...,n_L} \sum_{t: z_*^{(t)}=i, \hat{z}^{(t)}=0}  \sum_{j \in V_i} 2(A_{ij}-\bar{A}_{0j})^2 \nonumber \\
	\geq & \max_{i=1,...,n_L} 2 \left ( \frac{dc_{\textnormal{min}} \delta_1}{n_L+1} \right)^2 \frac{vn}{n_L+1} t_{i0} \nonumber \\
	\geq & \max_{i=1,...,n_L} 2 \left ( \frac{dc_{\textnormal{min}} \delta_1}{n_L+1} \right)^2  \frac{vn}{n_L+1} \frac{t_{i0}}{t_{i*}} \frac{c_{\textnormal{min}}T}{n_L+1} \nonumber \\
	\geq & \max_{i=1,...,n_L} \frac{ d^2 v nT}{\delta n_L^4} \frac{t_{i0}}{t_{i*}}, \label{thm:separate_temp}
	\end{align}
	where $ \delta$ is a positive constant. 
	
	Also note that by using the same argument in Theorem \ref{thm:label_consistency},
	\begin{align*}
	\max_z \frac{  n_L^4 }{d^2 v nT}|L_G(z)-L_P(z)| =o_p(1)
	\end{align*}
	holds under the condition $ (n_L^5\log T)/(d^2 v T)=o(1) $ and $ (n_L^8 \log n_L )/ (d^4 v^2 n)=o(1) $. Therefore, 
	\begin{align}
	\mathbb{P}\left ( \max_{i=1,...,n_L}\frac{t_{i0}}{t_{i*}} \geq \eta \right ) \rightarrow 0. \label{tricky_solved}
	\end{align}
\end{proof}

\begin{proof}[Proof of Theorem \ref{thm:label_consistency_null}]
	Due to \eqref{tricky} and \eqref{tricky_solved}, there exists $\delta_2>0$ such that 
	\begin{align*}
	t_{ii}\geq \delta_2 t_{i*}   \quad \mbox{for } i =0,...,n_L,
	\end{align*}
	with probability approaching 1.
	
	For $i=1,...,n_L$, $k=0,...,n_L$, $k \neq i$ and $j\in V_i$,
	\begin{align*}
	\frac{t_{ik}}{t_k} \leq \frac{t_{ik}}{t_{ik}+t_{kk}} =\frac{t_{ik}/t_{i*}}{t_{ik}/t_{i*} +t_{kk}/t_{k*} \cdot t_{k*}/t_{i*}} \leq \frac{1}{1+\delta_2\cdot c_{\textnormal{max}}/c_{\textnormal{min}}}  = \delta_3,
	\end{align*}
	\begin{align*}
	A_{ij}-\bar{A}_{kj} & = \frac{\sum_t (A_{ij}-P_j^{(t)}) 1(\hat{z}^{(t)}=k)}{t_k} \\
	& = \frac{\sum_{k'=0}^{n_L} (A_{ij}-A_{k'j}) t_{k'k}}{t_k} \\
	& \geq \frac{d \sum_{k' \neq i} t_{k'k}}{t_k}  \geq d (1-\delta_3).
	\end{align*}
	
	Now note that with probability approaching 1, 
	\begin{align*}
	L_P(z_*)-L_P(\hat{z})  \geq & \sum_{t=1}^T \sum_{j=1}^n 2(P_j^{(t)}-\bar{A}_{\hat{z}^{(t)},j})^2 \\
	\geq & \sum_{i=1}^{n_L} \sum_{0\leq k \leq n_L, k\neq i} \sum_{t: z_*^{(t)}=i, \hat{z}^{(t)}=k}  \sum_{j \in V_i} 2(A_{ij}-\bar{A}_{kj})^2 \\
	\geq &   \frac{2vn}{n_L}  \sum_{i=1}^{n_L} \sum_{0\leq k \leq n_L, k\neq i} t_{ik} d^2 (1-\delta_3)^2,
	\end{align*}
	which implies that there exists $\delta>0$ such that with probability approaching 1,
	\begin{align}\label{sep11}
	\frac{\delta n_L}{d^2 v n T } (L_P(z_*)-L_P(\hat{z})) \geq  \sum_{i=1}^{n_L} \sum_{0\leq k \leq n_L, k\neq i} \frac{ t_{ik}}{T}.
	\end{align}
	By the same argument in Theorem \ref{thm:label_consistency}, this further implies 
	\begin{align}
	\sum_{i=1}^{n_L} \sum_{0\leq k \leq n_L, k\neq i} \frac{ t_{ik}}{T} =o_p(1), \quad \mbox{as } n_L\rightarrow \infty, n\rightarrow \infty, T\rightarrow \infty, \label{part1}
	\end{align}
	if $ (n_L^2\log T)/(d^2 v T)=o(1) $ and $ (n_L^2 \log n_L )/ (d^4 v^2 n)=o(1)$. 
	
	Similarly,
	\begin{align}
	\sum_{i=1}^{n_L} \sum_{0\leq k \leq n_L, k\neq i} \frac{ (n_L+1) t_{ik}}{T} =o_p(1), \quad \mbox{as } n_L\rightarrow \infty, n\rightarrow \infty, T\rightarrow \infty, \label{novel} 
	\end{align}
	if $ (n_L^3\log T)/(d^2 v T)=o(1) $ and $ (n_L^4 \log n_L )/ (d^4 v^2 n)=o(1) $. 
	
	Now we bound $t_{0i}$, $i=1,...,n_L$.     From \eqref{novel},
	$\sum_{1\leq k\leq n_L,k\neq i} t_{ki} = o_p( T/(n_L+1))$. And from $\delta_2 Tc_{\textnormal{min}}/(n_L+1) \leq  \delta_2 t_{i*} \leq t_{ii}$, $\sum_{1\leq k\leq n_L,k\neq i} t_{ki} \leq t_{ii}$, with probability approaching 1. Moreover, from \eqref{t00}, $t_{0i}\leq (1-\delta_1) t_{0*}$.
	
	Therefore,  there exists $\delta_4>0$ such that for $i=1,...,n_L$, $j\in V_i$,
	\begin{align*}
	A_{ij}-\bar{A}_{ij} & = \frac{\sum_t (A_{ij}-P_j^{(t)}) 1(\hat{z}^{(t)}=i)}{t_i} \\
	& \geq \frac{(A_{ij}-A_{0j})t_{0i}}{t_i} \\
	& \geq \frac{dt_{0i}}{t_{0i}+t_{ii}+\sum_{1\leq k\leq n_L,k\neq i} t_{ki}} \\
	& \geq \frac{dt_{0i}}{(1-\delta_1)t_{0*}+2 t_{ii}} \\
	& \geq \frac{dt_{0i}}{(1-\delta_1)t_{0*}+2 t_{i*}} \geq \frac{d n_L t_{0i}}{\delta_4 T}.
	\end{align*}
	It follows that
	\begin{align}
	L_P(z_*)-L_P(\hat{z})  \geq & \max_{i=1,...,n_L} \sum_{t: z_*^{(t)}=i, \hat{z}^{(t)}=i}  \sum_{j \in V_i} 2(A_{ij}-\bar{A}_{ij})^2  \nonumber \\
	\geq & \max_{i=1,...,n_L} 2 \left ( \frac{d n_L t_{0i}}{\delta_4 T} \right)^2 \frac{vn}{n_L+1} t_{ii} \nonumber  \\
	\geq & \max_{i=1,...,n_L} 2 \left ( \frac{d}{\delta_4} \right)^2 \left ( \frac{n_L t_{0i}}{T} \right )^2   \frac{vn}{n_L+1}  \delta_2 t_{i*} \nonumber \\
	\geq & \max_{i=1,...,n_L} 2 \left ( \frac{d}{\delta_4} \right)^2 \left ( \frac{n_L t_{0i}}{T} \right )^2    \frac{vn}{n_L+1}  \delta_2 T \frac{c_\textnormal{min}}{n_L+1} \nonumber  \\
	\geq & \max_{i=1,...,n_L} \frac{ d^2 v nT}{\delta' n_L^2} \left ( \frac{n_L t_{0i}}{T} \right )^2,  \label{sep22}
	\end{align}
	where $\delta'$ is positive constant. 
	
	By using the same argument in Theorem \ref{thm:label_consistency}, 
	\begin{align}
	\max_{i=1,...,n_L}  \frac{n_L t_{0i}}{T} =o_p(1),  \label{part2}
	\end{align}
	if $ (n_L^3\log T)/(d^2 v T)=o(1) $ and $ (n_L^4 \log n_L )/ (d^4 v^2 n)=o(1) $. 
	
	It follows that
	\begin{align*}
	\sum_{i=1}^{n_L} \frac{t_{0i}}{T}=o_p(1), 
	\end{align*}
	Combining \eqref{part1} and \eqref{part2},
	\begin{align*}
	\frac{T_e}{T} =o_p(1), \quad \mbox{as } n_L\rightarrow \infty, n\rightarrow \infty, T\rightarrow \infty.
	\end{align*}	
\end{proof}

\begin{proof}[Proof of Theorem \ref{thm:estimation_null}]
	By the same argument in Theorem \ref{thm:estimation}, it is sufficient to show  
	\begin{align}\label{final}
	\frac{(n_L+1) T_e}{T} =o_p(1), \quad \mbox{as } n_L\rightarrow \infty, n\rightarrow \infty, T\rightarrow \infty.
	\end{align}	
	We have proved 
	\begin{align}
	\sum_{i=1}^{n_L} \sum_{0\leq k \leq n_L, k\neq i} \frac{ (n_L+1) t_{ik}}{T} =o_p(1), \quad \mbox{as } n_L\rightarrow \infty, n\rightarrow \infty, T\rightarrow \infty,
	\end{align}
	if $ (n_L^3\log T)/(d^2 v T)=o(1) $ and $ (n_L^4 \log n_L )/ (d^4 v^2 n)=o(1) $. 
	
	From \eqref{sep22}, there exists $\delta'>0$ such that
	\begin{align*}
	L_P(z_*)-L_P(\hat{z})  \geq    \max_{i=1,...,n_L} \frac{ d^2 v nT}{\delta' n_L^4} \left ( \frac{n_L(n_L+1) t_{0i}}{T} \right )^2,
	\end{align*}
	which further implies
	\begin{align*}
	\max_{i=1,...,n_L}  \frac{n_L(n_L+1)t_{0i}}{T} =o_p(1), 
	\end{align*}
	if $ (n_L^5\log T)/(d^2 v T)=o(1) $ and $ (n_L^8 \log n_L )/ (d^4 v^2 n)=o(1) $.
	
	It follows that
	\begin{align*}
	\sum_{i=1}^{n_L} \frac{(n_L+1)t_{0i}}{T}=o_p(1), 
	\end{align*}
	and \eqref{final} is therefore proved and so is the theorem. 
\end{proof}

 \newcommand{\noop}[1]{}


\begin{thebibliography}{}
	
	\bibitem[Abbe, 2017]{abbe2017community}
	Abbe, E. (2017).
	\newblock Community detection and stochastic block models: recent developments.
	\newblock {\em The Journal of Machine Learning Research}, 18(1):6446--6531.
	
	\bibitem[Allman et~al., 2009]{allman2009identifiability}
	Allman, E.~S., Matias, C., and Rhodes, J.~A. (2009).
	\newblock Identifiability of parameters in latent structure models with many
	observed variables.
	\newblock {\em The Annals of Statistics}, 37(6A):3099--3132.
	
	\bibitem[Barab\'{a}si and Albert, 1999]{Barabasi&Albert1999}
	Barab\'{a}si, A.-L. and Albert, R. (1999).
	\newblock Emergence of scaling in random networks.
	\newblock {\em Science}, 286:509--512.
	
	\bibitem[Bickel and Chen, 2009]{Bickel&Chen2009}
	Bickel, P.~J. and Chen, A. (2009).
	\newblock A nonparametric view of network models and {N}ewman-{G}irvan and
	other modularities.
	\newblock {\em Proc. Natl. Acad. Sci. USA}, 106:21068--21073.
	
	\bibitem[Cairns and Schwager, 1987]{Cairns87}
	Cairns, S.~J. and Schwager, S.~J. (1987).
	\newblock A comparison of association indices.
	\newblock {\em Animal Behavior}, 35.
	
	\bibitem[Carreira-Perpin{\'a}n and Renals, 2000]{carreira2000practical}
	Carreira-Perpin{\'a}n, M.~A. and Renals, S. (2000).
	\newblock Practical identifiability of finite mixtures of multivariate
	bernoulli distributions.
	\newblock {\em Neural Computation}, 12(1):141--152.
	
	\bibitem[Choi et~al., 2012]{Choietal2011}
	Choi, D.~S., Wolfe, P.~J., and Airoldi, E.~M. (2012).
	\newblock Stochastic blockmodels with growing number of classes.
	\newblock {\em Biometrika}, 99:273--284.
	
	\bibitem[Diaconis and Janson, 2007]{diaconis2007graph}
	Diaconis, P. and Janson, S. (2007).
	\newblock Graph limits and exchangeable random graphs.
	\newblock {\em arXiv preprint arXiv:0712.2749}.
	
	\bibitem[Dubhashi and Panconesi, 2009]{dubhashi2009concentration}
	Dubhashi, D.~P. and Panconesi, A. (2009).
	\newblock {\em Concentration of measure for the analysis of randomized
		algorithms}.
	\newblock Cambridge University Press.
	
	\bibitem[Fraley and Raftery, 2002]{Fraley2002}
	Fraley, C. and Raftery, A.~E. (2002).
	\newblock Model-based clustering, discriminant analysis, and density
	estimation.
	\newblock {\em Journal of the American statistical Association},
	97(458):611--631.
	
	\bibitem[Frank and Strauss, 1986]{Frank&Strauss1986}
	Frank, O. and Strauss, D. (1986).
	\newblock {M}arkov graphs.
	\newblock {\em Journal of the American Statistical Asscociation}, 81:832--842.
	
	\bibitem[Gao et~al., 2015]{gao2015rate}
	Gao, C., Lu, Y., Zhou, H.~H., et~al. (2015).
	\newblock Rate-optimal graphon estimation.
	\newblock {\em The Annals of Statistics}, 43(6):2624--2652.
	
	\bibitem[Getoor and Diehl, 2005]{Getoor2005}
	Getoor, L. and Diehl, C.~P. (2005).
	\newblock Link mining: A survey.
	\newblock {\em ACM SIGKDD Explorations Newsletter}, 7(2):3--12.
	
	\bibitem[Goldenberg et~al., 2010]{Goldenberg2010}
	Goldenberg, A., Zheng, A.~X., Fienberg, S.~E., and Airoldi, E.~M. (2010).
	\newblock A survey of statistical network models.
	\newblock {\em Foundations and Trends in Machine Learning}, 2:129--233.
	
	\bibitem[Gu and Xu, 2019]{gu2019sufficient}
	Gu, Y. and Xu, G. (2019).
	\newblock The sufficient and necessary condition for the identifiability and
	estimability of the dina model.
	\newblock {\em Psychometrika}, 84(2):468--483.
	
	\bibitem[Gyllenberg et~al., 1994]{gyllenberg1994non}
	Gyllenberg, M., Koski, T., Reilink, E., and Verlaan, M. (1994).
	\newblock Non-uniqueness in probabilistic numerical identification of bacteria.
	\newblock {\em Journal of Applied Probability}, 31(2):542--548.
	
	\bibitem[Hoeffding, 1963]{Hoeffding63}
	Hoeffding, W. (1963).
	\newblock Probability inequalities for sums of bounded random variables.
	\newblock {\em Journal of the American Statistical Association},
	58(301):13--30.
	
	\bibitem[Hoff, 2007]{Hoff2007}
	Hoff, P.~D. (2007).
	\newblock Modeling homophily and stochastic equivalence in symmetric relational
	data.
	\newblock In {\em Advances in Neural Information Processing Systems},
	volume~19. MIT Press, Cambridge, MA.
	
	\bibitem[Hoff et~al., 2002]{Hoff2002}
	Hoff, P.~D., Raftery, A.~E., and Handcock, M.~S. (2002).
	\newblock Latent space approaches to social network analysis.
	\newblock {\em Journal of the American Statistical Asscociation},
	97:1090--1098.
	
	\bibitem[MacQueen, 1967]{macqueen1967}
	MacQueen, J. (1967).
	\newblock Some methods for classification and analysis of multivariate
	observations.
	\newblock In {\em Proceedings of the Fifth Berkeley Symposium on Mathematical
		Statistics and Probability, Volume 1: Statistics}, pages 281--297, Berkeley,
	Calif. University of California Press.
	
	\bibitem[Moreno, 1934]{Moreno34}
	Moreno, J.~L. (1934).
	\newblock {\em Who Shall Survive? A New Approach to the Problem of Human
		Interactions}.
	\newblock Nervous and Mental Disease Publishing Co.
	
	\bibitem[Newman, 2010]{Newman2010}
	Newman, M. E.~J. (2010).
	\newblock {\em Networks: An introduction}.
	\newblock Oxford University Press.
	
	\bibitem[Newman and Girvan, 2004]{Newman&Girvan2004}
	Newman, M. E.~J. and Girvan, M. (2004).
	\newblock Finding and evaluating community structure in networks.
	\newblock {\em Phys. Rev. E}, 69(2):026113.
	
	\bibitem[Popescu et~al., 2016]{popescu2016bounds}
	Popescu, P.~G., Dragomir, S., Slu{\c{s}}anschi, E.~I., and
	St{\u{a}}n{\u{a}}{\c{s}}il{\u{a}}, O.~N. (2016).
	\newblock Bounds for kullback-leibler divergence.
	\newblock {\em Electronic Journal of Differential Equations}, 2016.
	
	\bibitem[Robins et~al., 2007]{robins2007introduction}
	Robins, G., Pattison, P., Kalish, Y., and Lusher, D. (2007).
	\newblock An introduction to exponential random graph (p*) models for social
	networks.
	\newblock {\em Social networks}, 29(2):173--191.
	
	\bibitem[Teicher, 1963]{teicher1963identifiability}
	Teicher, H. (1963).
	\newblock Identifiability of finite mixtures.
	\newblock {\em The annals of Mathematical statistics}, pages 1265--1269.
	
	\bibitem[Van~der Vaart, 2000]{van2000asymptotic}
	Van~der Vaart, A.~W. (2000).
	\newblock {\em Asymptotic statistics}, volume~3.
	\newblock Cambridge university press.
	
	\bibitem[Wasserman and Faust, 1994]{Wasserman94}
	Wasserman, S. and Faust, C. (1994).
	\newblock {\em Social Network Analysis: Methods and Applications}.
	\newblock Cambridge University Press.
	
	\bibitem[Weko and Zhao, 2017]{weko2017penalized}
	Weko, C. and Zhao, Y. (2017).
	\newblock Penalized component hub models.
	\newblock {\em Social Networks}, 49:27--36.
	
	\bibitem[Xu et~al., 2017]{xu2017identifiability}
	Xu, G. et~al. (2017).
	\newblock Identifiability of restricted latent class models with binary
	responses.
	\newblock {\em The Annals of Statistics}, 45(2):675--707.
	
	\bibitem[Yakowitz and Spragins, 1968]{yakowitz1968identifiability}
	Yakowitz, S.~J. and Spragins, J.~D. (1968).
	\newblock On the identifiability of finite mixtures.
	\newblock {\em The Annals of Mathematical Statistics}, pages 209--214.
	
	\bibitem[Zhang et~al., 2017]{zhang2017estimating}
	Zhang, Y., Levina, E., and Zhu, J. (2017).
	\newblock Estimating network edge probabilities by neighbourhood smoothing.
	\newblock {\em Biometrika}, 104(4):771--783.
	
	\bibitem[Zhao, 2017]{zhao2017survey}
	Zhao, Y. (2017).
	\newblock A survey on theoretical advances of community detection in networks.
	\newblock {\em Wiley Interdisciplinary Reviews: Computational Statistics},
	9(5).
	
	\bibitem[Zhao, 2018]{zhao2018temporal}
	Zhao, Y. (2018).
	\newblock Network inference from temporal-dependent grouped observations.
	\newblock {\em arXiv preprint arXiv:1808.08478}.
	
	\bibitem[Zhao, 2020]{zhao_bernstein}
	Zhao, Y. (2020).
	\newblock A note on new bernstein-type inequalities for the log-likelihood
	function of bernoulli variables.
	\newblock {\em Statistics \& Probability Letters}, page 108779.
	
	\bibitem[Zhao and Weko, 2019]{Zhao2015}
	Zhao, Y. and Weko, C. (2019).
	\newblock Network inference from grouped observations using hub models.
	\newblock {\em Statistica Sinica}, 29(1):225--244.
	
\end{thebibliography}
\end{document}